\DeclareMathOperator{\Aut}{Aut}
\DeclareMathOperator{\Gal}{Gal}
\DeclareMathOperator{\impart}{Im}
\DeclareMathOperator{\M}{M}
\DeclareMathOperator{\nrd}{nrd}
\DeclareMathOperator{\PSL}{PSL}
\DeclareMathOperator{\SL}{SL}
\newcommand{\C}{\mathbb C}
\newcommand{\Q}{\mathbb Q}
\newcommand{\R}{\mathbb R}
\newcommand{\Z}{\mathbb Z}
\newcommand{\Qbar}{\overline{\mathbb Q}}
\newcommand{\frakl}{\mathfrak{l}}
\newcommand{\frakN}{\mathfrak{N}}
\newcommand{\frakp}{\mathfrak{p}}
\newcommand{\frakD}{\mathfrak{D}}
\newcommand{\calH}{\mathscr{H}}
\newcommand{\calD}{\mathscr{D}}
\newcommand{\calO}{\mathscr{O}}
\newcommand{\psmod}[1]{~(\textup{\text{mod}}~{#1})}
\newcommand{\quat}[2]{\displaystyle{\biggl(\frac{#1}{#2}\biggr)}}
\newcommand{\eps}{\epsilon}
\newcommand{\la}{\langle}
\newcommand{\ra}{\rangle}
\begin{document}

\title*{Computing power series expansions of modular forms}
\author{John Voight and John Willis}
\institute{John Voight \at Department of Mathematics and Statistics, University of Vermont, 16 Colchester Ave, Burlington, VT 05401, USA, \email{jvoight@gmail.com}
\and John Willis \at Department of Mathematics and Statistics, University of South Carolina, 1523  Greene St, Columbia, SC 29205, USA \email{jmart.will@gmail.com}}
%
%
\maketitle

\abstract*{We exhibit a method to numerically compute power series expansions of modular forms on a cocompact Fuchsian group, using the explicit computation of a fundamental domain and linear algebra.  As applications, we compute Shimura curve parametrizations of elliptic curves over a totally real field, including the image of CM points, and equations for Shimura curves.}	

\abstract{We exhibit a method to numerically compute power series expansions of modular forms on a cocompact Fuchsian group, using the explicit computation a fundamental domain and linear algebra.  As applications, we compute Shimura curve parametrizations of elliptic curves over a totally real field, including the image of CM points, and equations for Shimura curves.}	

\section{Introduction}

A classical modular form $f:\calH \to \C$ on the upper half-plane $\calH$ satisfies the translation invariance $f(z+1)=f(z)$ for $z \in \calH$, so $f$ admits a Fourier expansion (or \emph{$q$-expansion}) 
\[ f(z)=\sum_{n=0}^{\infty} a_n q^n \]
at the cusp $\infty$, where $q=e^{2\pi iz}$.  If further $f$ is a normalized eigenform for the Hecke operators $T_n$, then the coefficients $a_n$ are the eigenvalues of $T_n$ for $n$ relatively prime to the level of $f$.  The \emph{$q$-expansion principle} expresses in a rigorous way the fact that a modular form is characterized by its $q$-expansion, and for this reason (and others) $q$-expansions remain an invaluable tool in the study of classical modular forms.

By contrast, modular forms on cocompact Fuchsian groups do not admit $q$-expansions due to the lack of cusps.  A \emph{modular form} $f:\calH \to \C$ of weight $k \in 2\Z_{\geq 0}$ for a cocompact Fuchsian group $\Gamma \leq \PSL_2(\R)$ is a holomorphic map satisfying
\[ f(g z)=j(g,z)^k f(z) \]
for all $g \in \Gamma$, where $j(g,z)=cz+d$ if $g=\begin{pmatrix}a & b \\ c & d \end{pmatrix}$.  

However, not all is lost: such a modular form $f$ still admits a power series expansion in the neighborhood of a point $p \in \calH$.  Indeed, a $q$-expansion is really just a power series expansion at $\infty$ in the parameter $q$, convergent for $|q|<1$; so it is natural to consider a neighborhood of $p$ normalized so the expansion also converges in the unit disc but for a parameter $w$.  For this purpose, we map the upper half-plane conformally to the unit disc $\calD$ via the map 
\begin{align*}
w:\calH &\to \calD \\
z &\mapsto w(z)=\frac{z-p}{z-\overline{p}} 
\end{align*}
sending $p \mapsto w(p)=0$, where $\overline{\phantom{x}}$ denotes complex conjugation.  We consider the series expansion of a form $f$ of weight $k$ given by
\begin{equation*} \tag{$*$}
 f(z)=(1-w)^k \sum_{n=0}^{\infty} b_n w^n
\end{equation*}
where $w=w(z)$, convergent in the disc $\calD$ with $|w|<1$.  

There are several reasons to consider series of the form $(*)$.  First, the term 
\[ 1-w(z)=\frac{p-\overline{p}}{z-\overline{p}} \]
is natural to include as it arises from the automorphy factor of the linear fractional transformation $w(z)$.  Second, the ordinary Taylor coefficients arise from evaluating derivatives of $f$, but the derivative of a modular form of weight $k \neq 0$ is no longer a modular form!  This can be ameliorated by considering instead the differential operator introduced by Maass and Shimura: 
\[ \partial_k = \frac{1}{2\pi i}\left(\frac{d}{dz} + \frac{k}{z-\overline{z}}\right). \] 
If $f$ is a modular form of weight $k$ then $\partial_k f$ transforms like a modular form of weight $k+2$, but at the price that $f$ is now only real analytic.  The coefficients $b_n$ in the expansion $(*)$ then arise from evaluating Shimura-Maass derivatives $\partial^n f$ of $f$ at the point $p$, where we let $\partial^n = \partial_{k+2(n-1)} \circ ... \circ\partial_{k+2}\circ\partial_k$.  Finally, when $p$ is a CM point and $\Gamma$ is a congruence group, the coefficients $b_n$ are algebraic (up to a normalization factor), as shown by Shimura \cite{Shimura1,Shimura2}.  Rodriguez-Villegas and Zagier \cite{RVZ} and O'Sullivan and Risager \cite{OSR} (see also the exposition by Zagier \cite{Zagier} and further work by Bertolini, Darmon, and Prasanna \cite[\S\S 5-6]{Prasanna}) further linked the coefficients $b_n$ to square roots of central values of Rankin-Selberg $L$-series and show that they satisfy a recursive formula arising from the differential structure of the ring of modular forms.  Mori \cite{Mori1,Mori2} and Datskovsky and Guerzhoy \cite{DatsGuerz} have also studied the $p$-adic interpolation properties of the coefficients $b_n$ in the context of $p$-adic $L$-functions, with some results also in the cocompact case.  For these reasons, we will consider series expansions of the form $(*)$ in this article, which we will call \emph{power series expansions}.  

In the special case when the Fuchsian group $\Gamma$ is commensurable with a triangle group, the differential approach to power series expansions of modular forms is particularly pleasing.  For $a,b,c \in \Z_{\geq 2} \cup \{\infty\}$ with $1/a+1/b+1/c<1$, we define the $(a,b,c)$-triangle group to be the subgroup of orientation-preserving isometries in the group generated by reflections in the sides of a hyperbolic triangle with angles $\pi/a,\pi/b,\pi/c$.  In this case, power series expansions for a uniformizing function at the vertices of the fundamental triangle are obtained as the inverse of the ratio of ${}_2F_1$-hypergeometric functions.  This case was also of great classical interest, and has been taken up again more recently by Bayer \cite{Bayer}, Bayer and Travesa \cite{BayerTravesa1,BayerTravesa2}, the first author \cite{Voight-thesis}, and Baba and Granath \cite{BabaGranath}; this includes the well-studied case where the Fuchsian group arises from the quaternion algebra of discriminant $6$ over $\Q$, corresponding to the $(2,4,6)$-triangle group.

In this article, we exhibit a general method for numerically computing power series expansions of modular forms for cocompact Fuchsian groups.  It is inspired by the method of Stark \cite{Stark} and Hejhal \cite{Hejhal1,Hejhal2,Hejhal3}, who used the same basic principle to compute Fourier expansions for Maass forms on $\SL_2(\Z)$ and the Hecke triangle groups.  (There has been substantial further work in this area; see for example Then \cite{Then}, Booker, Str\"ombergsson, and Venkatesh \cite{BSV}, and the references therein.)

The basic idea is quite simple.  Let $\Gamma$ be a Fuchsian group with compact fundamental domain $D \subset \calD$ contained in a circle of radius $\rho>0$.  To find a power series expansion $(*)$ for a form $f$ on $\Gamma$ of weight $k$ valid in $D$ to some precision $\eps>0$, we consider the approximation
\[ f(z) \approx f_N(z)=(1-w)^k\sum_{n=0}^{N} b_n w^n \]
valid for $|w|\leq\rho$ and some $N \in \Z_{\geq 0}$.  Then for a point $w=w(z)$ on the circle of radius $\rho$ with $w \not\in D$, we find $g \in \Gamma$ such that $w'=g w \in D$; letting $z'=z(w')$, by the modularity of $f$ we have
\[ (1-w')^k\sum_{n=0}^N b_n (w')^n = f_N(z') \approx f(z') = j(g,z)^k f(z) \approx j(g,z)^k (1-w)^k\sum_{n=0}^N b_n w^n \]
valid to precision $\eps>0$.  For each such point $w$, this equality imposes an approximate (nontrivial) linear relation on the unknown coefficients $b_n$.  By taking appropriate linear combinations of these relations, or (what seems better in practice) using the Cauchy integral formula, we recover the approximate coefficients $b_n$ using standard techniques in linear algebra.

An important issue that we try to address in this paper is the numerical stability of this method---unfortunately, we cannot benefit from the exponential decay in the terms of a Fourier expansion as in previous work, so our numerical methods must be correspondingly more robust.  Although we cannot prove that our results are correct, there are several tests that allow one to be quite convinced that they are correct, and we show in several examples that they agree with cases that are known.  (See also Remark \ref{canweproveit} and work of Booker, Str\"ombergsson and Venkatesh \cite{BSV}, who rigorously verify the numerical computations in Hejhal's method for Maass forms.)

Nelson \cite{Nelson,Nelson2} finds power series expansions by directly computing the Shimizu lift (a realization of the Jacquet-Langlands correspondence) of a modular form on a Shimura curve over $\Q$ to a classical modular curve.  It will be interesting to compare the two techniques.  Owing to its simplicity, we believe that our method is worthy of investigation.  It has generalizations to a wide variety of settings: noncongruence groups, nonarithmetic groups (e.g.\ nonarithmetic triangle groups), real analytic modular forms, and higher dimensional groups; and it applies equally well for arithmetic Fuchsian groups with an arbitrary totally real trace field.  

Finally, this analysis at the complex place suggest that $p$-adic analytic methods for computing power series expansions would also be interesting to investigate.  For example, Franc \cite{Franc} investigates the values of Shimura-Maass derivatives of modular forms at CM points from a rigid analytic perspective.

This paper is organized as follows.  In Section 2, we introduce some basic notation and background.  Then in Section 3, we exhibit our algorithm in detail.  Finally, in Section 4 we give several examples: in the first two examples we verify the correctness of our algorithm; in the third example, we compute the Shimura curve parametrization of an elliptic curve over a totally real field and the image of CM point; in the fourth example, we show how our methods can be used to compute the equation of a Shimura curve.

\section{Preliminaries}

We begin by considering the basic setup of our algorithm; as basic references we refer to Beardon \cite{Beardon} and Katok \cite{Katok}.  

\subsection*{Fuchsian groups and fundamental domains}

Let $\Gamma$ be a Fuchsian group, a discrete subgroup of $\PSL_2(\R)$, the orientation-preserving isometries of the upper half-plane $\calH$.  Suppose that $\Gamma$ is \emph{cofinite}, so $X=\Gamma \backslash \calH$ has finite hyperbolic area; then $\Gamma$ is finitely generated.  If $X$ is not compact, then $X$ has a cusp, and the existence of $q$-expansions at cusps in many cases obviates the need to compute power series expansions separately (and for Maass forms and generalizations, we refer to the method of Hejhal \cite{Hejhal2}); more seriously, our method apparently does not work as well in the non-cocompact case (see Example 2 in Section 4).  So we suppose that $\Gamma$ is cocompact.  

We will frequently move between the upper half-plane model $\calH$ and the Poincar\'e unit disc model $\calD$ for hyperbolic space, which are conformally identified via the maps
\begin{equation} \label{phimap}
\setlength{\arraycolsep}{0.5ex}
\begin{array}{rlcrl}
w:\calH &\to \calD & \quad & z:\calD &\to \calH \\
z &\mapsto w(p;z)=\displaystyle{\frac{z-p}{z-\overline{p}}} & & w &\mapsto z(p;w)=\displaystyle{\frac{\overline{p}w-p}{w-1}}
\end{array}
\end{equation}
for a choice of point $p \in \calH$.  Via this identification, the group $\Gamma$ acts also on $\calD$, and to ease notation we identify these actions.

A Fuchsian group $\Gamma$ is \emph{exact} if there is a finite set $G \subset \SL_2(K)$ with $K \hookrightarrow \Qbar \cap \R$ a number field whose image in $\PSL_2(K) \subset \PSL_2(\R)$ generates $\Gamma$.  When speaking in an algorithmic context, we will suppose that the group $\Gamma$ is exact.  (Even up to conjugation in $\PSL_2(\R)$, not every finitely generated Fuchsian group is exact.)  Algorithms for efficiently computing with algebraic number fields are well-known (see e.g.\ Cohen \cite{Cohen}), and we will use these without further mention.

In this setting, we have the following result, due to the first author \cite{Voight-funddom}.  For a point $p \in \calH$, we denote by $\Gamma_p=\{g \in \Gamma : g(p)=p\}$ the stabilizer of $p$ in $\Gamma$.

\begin{theorem} \label{funddom}
There exists an algorithm that, given as input an exact, cocompact Fuchsian group $\Gamma$ and a point $p \in \calH$ with $\Gamma_p=\{1\}$, computes as output a fundamental domain $D(p) \subset \calH$ for $\Gamma$ and an algorithm that, given $z \in \calH$ returns a point $z' \in D(p)$ and $g \in \Gamma$ such that $z'=g z$.
\end{theorem}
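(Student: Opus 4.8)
The plan is to realize $D(p)$ as a Dirichlet domain and to show that, although it is cut out by infinitely many inequalities, only finitely many of them are active, so that the whole object is computable by a terminating enumeration. Using the conformal identification \eqref{phimap}, I would first normalize so that the center $p$ maps to $0 \in \calD$; after this conjugation every element of $\Gamma$ acts on $\calD$ as a hyperbolic isometry, and distances to $p$ become distances to $0$. I then take $D(p)$ to be the Dirichlet domain
\[
D(p) = \{ z \in \calD : d(z, 0) \le d(z, g0) \text{ for all } g \in \Gamma \},
\]
the intersection over $g \neq 1$ of the closed half-planes bounded by the perpendicular bisector (a geodesic in $\calD$) of the segment from $0$ to $g0$. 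The hypothesis $\Gamma_p = \{1\}$ guarantees that $0$ lies in the interior and that $D(p)$ is a genuine fundamental domain rather than one for $\Gamma/\Gamma_p$; cocompactness guarantees that $D(p)$ is a compact hyperbolic polygon, and in particular has finite out-radius $\rho$.

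The key geometric observation is that the bisector of the segment $[0, g0]$ can meet the ball of radius $\rho$ about $0$ only if $d(0, g0) \le 2\rho$. Because $\Gamma$ acts properly discontinuously, the displacement set $\{d(0, g0) : g \in \Gamma\}$ is discrete and only finitely many elements have displacement at most $2\rho$; hence only finitely many bisectors are active and $D(p)$ is a finite-sided polygon. To compute it, I would enumerate elements of $\Gamma$ by word length in the given generating set $G$ — here exactness is essential, since equality of elements and the displacements $d(0, g0)$ can be decided by exact arithmetic in the number field $K$ — maintaining the finite intersection $D_S$ of the half-planes found so far and updating its boundary as a cyclically ordered sequence of geodesic arcs, together with its current out-radius.

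The main issue, and the heart of the argument, is a computable certificate that the partial domain $D_S$ already equals $D(p)$. For this I would appeal to Poincar\'e's polygon theorem together with the generators: I compute the side-pairing of $D_S$, and once every side is paired by an element of $\Gamma$ and the vertex (elliptic) cycles have angle sums that are integer submultiples of $2\pi$, Poincar\'e's theorem certifies that $D_S$ is an exact fundamental domain for the subgroup $\Gamma' = \la \textrm{side-pairings} \ra \le \Gamma$ generated by the pairing elements. It then remains to verify $\Gamma' = \Gamma$, which I can check by running the reduction algorithm below on each given generator $g_i \in G$ and confirming it reduces to the identity; if some $g_i$ does not, I enlarge $S$ and continue. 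Discreteness of the displacement set guarantees this loop terminates after finitely many enlargements, since only the finitely many elements with $d(0, g0) \le 2\rho$ can ever be added.

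Finally, the auxiliary reduction algorithm is essentially a descent on distance to $0$. Given $z \in \calH$, viewed in $\calD$, I repeatedly search the finite set of side-pairing generators for an element $g$ with $d(gz, 0) < d(z, 0)$ and replace $z$ by $gz$, accumulating the product; since $d(z, g0) = d(g^{-1}z, 0)$, a point lies in $\overline{D(p)}$ exactly when it minimizes distance to $0$ within its $\Gamma$-orbit, so the process halts precisely at a point of $D(p)$, and the accumulated product is the required $g$. Termination again follows from discreteness, as the distances to $0$ strictly decrease along a discrete, bounded-below set of values. The principal obstacle throughout is the completeness certificate of the preceding paragraph — converting the infinite intersection into a provably finite computation — whereas the finiteness of the polygon and the correctness of the reduction step are comparatively routine consequences of proper discontinuity and cocompactness.
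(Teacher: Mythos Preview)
The paper does not actually prove this theorem; it simply attributes the result to the first author's earlier work \cite{Voight-funddom} and uses it as a black box. Your sketch is in fact a faithful outline of the argument in that reference: the Dirichlet domain centered at $p$, the bound $d(0,g0) \le 2\rho$ limiting the active bisectors to a finite set, enumeration of group elements with Poincar\'e's polygon theorem as a computable halting certificate, the check that the side-pairing subgroup exhausts $\Gamma$ by reducing the given generators, and the distance-decreasing reduction to a point of $D(p)$ are exactly the ingredients there.

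One point worth tightening: your termination argument invokes the out-radius $\rho$, but $\rho$ is not known until the domain has been computed. This is not a genuine gap --- the algorithm halts on the Poincar\'e certificate, which does not mention $\rho$, and $\rho$ enters only in the \emph{proof} that the certificate is eventually satisfied (once the finitely many elements of displacement at most $2\rho$ have been enumerated) --- but stating this explicitly would remove the appearance of circularity.
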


In particular, in the course of exhibiting this algorithm a suite of methods for computing in the unit disc are developed.  The algorithm in Theorem \ref{funddom} has been implemented in Magma \cite{Magma}.

The fundamental domain $D(p)$ in Theorem \ref{funddom} is the \emph{Dirichlet domain} centered at $p$,
\[ D(p)=\{z \in \calH : d(z,p) \leq d(g z,p) \text{ for all $g \in \Gamma$}\} \]
where $d$ is the hyperbolic distance.  The set $D(p)$ is a closed, connected, and hyperbolically convex domain whose boundary consists of finitely many geodesic segments.  The image of $D(p)$ in $\calD$ is analogously described, and we use the same notation for it.  A domain with this description is indeed desirable for consideration of power series centered at $p$, as we collect in each orbit of $\Gamma$ the points closest to $p$.

\subsection*{Modular forms and power series expansions}

A \emph{(holomorphic) modular form} of weight $k \in 2\Z_{\geq 0}$ for $\Gamma$ is a holomorphic map $f:\calH \to \C$ with the property that
\begin{equation} \label{transform} 
f(g z)=j(g,z)^k f(z)
\end{equation}
for all $g = \pm \begin{pmatrix} a & b \\ c & d \end{pmatrix} \in \Gamma$ where $j(g,z)=cz+d$.  (Note that although the matrix is only defined up to sign, the expression $j(g,z)^k$ is well-defined since $k$ is even.  Our methods would extend in a natural way to forms of odd weight with character on subgroups of $\SL_2(\R)$, but for simplicity we do not consider them here.)  Let $M_k(\Gamma)$ be the finite-dimensional $\C$-vector space of modular forms of weight $k$ for $\Gamma$, and let $M(\Gamma)=\bigoplus_{k} M_k(\Gamma)$ be the ring of modular forms for $\Gamma$ under multiplication.

A function $f: \calH \to \C$ is said to be \emph{nearly holomorphic} if it is of the form
\[ f(z)=\sum_{d=0}^{m} \frac{f_d(z)}{(z-\overline{z})^d} \]
where each $f_d:\calH \to \C$ is holomorphic.  A nearly holomorphic function is real analytic.  A \emph{nearly holomorphic modular form} of weight $k \in 2\Z_{\geq 0}$ for $\Gamma$ is a nearly holomorphic function $f:\calH \to \C$ that transforms under $\Gamma$ as in  (\ref{transform}).  Let $M_k^*(\Gamma)$ be the $\C$-vector space of nearly holomorphic modular forms of weight $k$ for $\Gamma$ and let $M^*(\Gamma)=\bigoplus_k M_k^*(\Gamma)$; then $M_k(\Gamma) \subseteq M_k^*(\Gamma)$.

Via the identification of the upper half-plane with the unit disc, we can consider a modular form $f$ also as a function on the unit disc.  The transformation property (\ref{transform}) of $g \in \Gamma$ is then described by
\begin{equation} \label{fgw}
 f(gz) = (cz+d)^k f(z) = \left( \frac{(c\overline{p}+d)w - (p+d)}{w-1} \right)^k f(z)\qquad\text{where }z = z(w).
\end{equation}

Let $f$ be a modular form of weight $k$ for $\Gamma$.  Since $f$ is holomorphic in $\calD$, it has a \emph{power series expansion}
\begin{equation} \label{powseries}
f(z)=(1-w)^k \sum_{n=0}^{\infty} b_n w^n 
\end{equation}
with $b_n \in \C$ and $w=w(z)$, convergent in $\calD$ and uniformly convergent on any compact subset.  

The coefficients $b_n$, like the usual Taylor coefficients, are related to derivatives of $f$ as follows.  Define the \emph{Shimura-Maass differential operator} $\partial_k : M_k^*(\Gamma) \to M_{k+2}^*(\Gamma)$ by
\[ \partial_k = \frac{1}{2\pi i}\left(\frac{d}{dz} + \frac{k}{z-\overline{z}}\right). \]
Even if $f \in M_k(\Gamma)$ is holomorphic, $\partial_k f \in M_{k+2}^*(\Gamma)$ is only nearly holomorphic; indeed, 
the ring $M^*(\Gamma)$ of nearly holomorphic modular forms is the smallest ring of functions which contains the ring of holomorphic forms $M(\Gamma)$ and is closed under the Shimura-Maass differential operators.  For $n \geq 0$ let $\partial_k^n = \partial_{k+2(n-1)} \circ \cdots \circ \partial_{k+2} \circ \partial_k$, and abbreviate $\partial^n=\partial_k^n$.  We have the following proposition, proven by induction: see e.g.\ Zagier \cite[Proposition 17]{Zagier} (but note the sign error!).

\begin{lemma} \label{powcn}
Let $f:\calH\to\C$ be holomorphic at $p\in\calH$.  Then $f$ admits the series expansion \textup{(\ref{powseries})} in a neighborhood of $p$ with
$$b_n = \frac{(\partial^n f)(p)}{n!}(-4\pi y)^n$$
where $y = \impart(p)$.
\end{lemma}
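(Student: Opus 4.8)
The plan is to transport the Shimura-Maass operator from $\calH$ to the unit disc $\calD$ via the conformal map $w = w(p;z)$ and to track how the iterated operator acts on the coefficient function $F(w) = \sum_n b_n w^n$, then evaluate everything at the center $p$, where $w = \overline{w} = 0$. Note that the statement is purely local and uses only holomorphy near $p$, not modularity of $f$; existence of the expansion (\ref{powseries}) is immediate, since $z \mapsto w$ is conformal with $w(p)=0$ and $1-w$ is holomorphic and nonvanishing on $\calD$, so that $F(w) = (1-w)^{-k} f(z(p;w))$ is holomorphic near $w=0$ and $b_n$ is its $n$-th Taylor coefficient. It remains to identify $F^{(n)}(0)/n!$ with the asserted expression in $(\partial^n f)(p)$.

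First I would record the elementary identities from the M\"obius map: since $1-w = (p-\overline{p})/(z-\overline{p}) = 2iy/(z-\overline{p})$ we get $dw/dz = (1-w)^2/(2iy)$, and a short computation of imaginary parts gives $z-\overline{z} = 2iy(1-w\overline{w})/\bigl((1-w)(1-\overline{w})\bigr)$. Using the Wirtinger chain rule $\partial/\partial z = (dw/dz)\,\partial/\partial w$ (valid because $w$ is holomorphic in $z$, so $\partial\overline{w}/\partial z = 0$) together with the factored form $\partial_\ell g = (2\pi i)^{-1}(z-\overline{z})^{-\ell}\,\partial_z\bigl[(z-\overline{z})^\ell g\bigr]$, I would rewrite the operator on the disc. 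Setting $\Phi_\ell(g) = (1-w)^{-\ell} g$, a direct calculation yields the identity of (generally non-holomorphic) functions of $w,\overline{w}$
\[
\Phi_{\ell+2}(\partial_\ell g) = \frac{1}{-4\pi y}\left(\frac{\partial}{\partial w} - \frac{\ell\,\overline{w}}{1-w\overline{w}}\right)\Phi_\ell(g) =: L_\ell\,\Phi_\ell(g).
\]

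The key step, and the one place where the near-holomorphy must be handled, is the induction. Writing $G_n = \Phi_{k+2n}(\partial^n f) = (L_{k+2(n-1)}\circ\cdots\circ L_k)(F)$, I claim $G_n = (-4\pi y)^{-n}F^{(n)}(w) + \overline{w}\,R_n(w,\overline{w})$ for some smooth $R_n$. The point is that the multiplier term in each $L_\ell$ carries an explicit factor $\overline{w}$, while the Wirtinger derivative $\partial/\partial w$ destroys no such factor (as $\partial\overline{w}/\partial w = 0$); hence every $\overline{w}$-term is preserved and the only new $\overline{w}$-free contribution is $(-4\pi y)^{-1}\partial_w\bigl[(-4\pi y)^{-(n-1)}F^{(n-1)}\bigr]$, which advances the claim. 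Since $\Phi_\ell(g)|_{w=0} = g(p)$, evaluating at the center makes the factor $\overline{w}$ kill all correction terms, so $(\partial^n f)(p) = G_n|_{w=\overline{w}=0} = (-4\pi y)^{-n}F^{(n)}(0) = (-4\pi y)^{-n}\,n!\,b_n$, which rearranges to the stated formula. The main obstacle is exactly the verification that the anti-holomorphic contributions vanish at $p$; once the factored operator form and the disc identity for $L_\ell$ are in hand, this reduces to the bookkeeping observation that $\overline{w}$-factors are created only by the multiplier and never cancelled by $\partial_w$.
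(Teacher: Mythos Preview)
Your argument is correct, and it is precisely the inductive approach the paper points to (the paper gives no proof of its own, only the remark ``proven by induction'' with a reference to Zagier). Your transport of $\partial_\ell$ to the disc operator $L_\ell = (-4\pi y)^{-1}\bigl(\partial_w - \ell\,\overline{w}/(1-w\overline{w})\bigr)$ via the conjugation $\Phi_\ell$, together with the observation that every non-holomorphic contribution carries an explicit factor $\overline{w}$ and hence vanishes at the center, is exactly the standard mechanism behind this lemma.
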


\begin{remark} \label{recurrence}
This expression of the coefficients $b_n$ in terms of derivatives implies that they can also be given as (essentially) the constant terms of a sequence of polynomials satisfying a recurrence relation, arising from the differential structure on $M^*(\Gamma)$: see Rodriguez-Villegas and Zagier \cite[\S\S 6--7]{RVZ} and Zagier \cite[\S 5.4, \S 6.3]{Zagier}, who carry this out for finite index subgroups of $\SL_2(\Z)$, and more recent work of Baba and Granath \cite{BabaGranath}.  
\end{remark}

The expression for the regular derivative 
\[ D^n = \frac{1}{(2\pi i)^n} \frac{d^n}{dz^n} \] 
in terms of $\partial^n$ is given in terms of Laguerre polynomials: see Rodriguez-Villegas and Zagier \cite[\S 2]{RVZ}, Zagier \cite[(56)--(57)]{Zagier}, or O'Sullivan and Risager \cite[Proposition 3.1]{OSR}.

\begin{lemma} \label{Dvspartial}
We have
	$$\partial^nf = \sum_{r=0}^n\begin{pmatrix}n\\r\end{pmatrix}\frac{(k+r)_{n-r}}{(-4\pi y)^{n-r}}D^r f,$$
and
\[ D^n f = \sum_{r=0}^n (-1)^{n-r} \binom{n}{r} \frac{(k+r)_{n-r}}{(-4\pi y)^{n-r}} \partial^r f \]
where $y=\impart p$ and $(a)_m=a(a+1)\cdots (a+m-1)$ is the Pochhammer symbol.
\end{lemma}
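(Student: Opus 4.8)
The plan is to prove the first identity by induction on $n$ and then obtain the second by an essentially identical induction (or, equivalently, by inverting the resulting unipotent triangular relation). Throughout I treat both sides as real-analytic functions on $\calH$, writing $y=\impart z$ and setting $\varphi = 1/(4\pi y)$; the stated identities at the point $p$ follow by specializing $z=p$ at the end. Using $(-4\pi y)^{-(n-r)} = (-1)^{n-r}\varphi^{n-r}$, the first claim becomes $\partial^n f = \sum_{r=0}^n c_{n,r} D^r f$ with $c_{n,r} = (-1)^{n-r}\binom{n}{r}(k+r)_{n-r}\varphi^{n-r}$, and the second becomes $D^n f = \sum_{r=0}^n d_{n,r}\partial^r f$ with $d_{n,r} = \binom{n}{r}(k+r)_{n-r}\varphi^{n-r}$.

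The first preliminary step is to record the shape of the operators. Writing $z-\overline{z}=2iy$ and differentiating $y$ in the nearly-holomorphic sense (treating $z$ and $\overline{z}$ as independent, so that $\partial y/\partial z = 1/(2i)$), a short computation gives $\partial_k = D - k\varphi$ and the crucial relation $D\varphi = \varphi^2$, whence $D\varphi^m = m\varphi^{m+1}$ by the Leibniz rule. Getting these two signs right is exactly the point flagged by the ``sign error'' remark, and it is the one place where care is genuinely required; everything downstream is formal.

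For the inductive step of the first identity I apply $\partial_{k+2n}=D-(k+2n)\varphi$ to $\partial^n f$, use that $D$ is a derivation together with $D c_{n,r}=(n-r)\varphi\, c_{n,r}$, and collect the coefficient of $D^s f$. This yields the recurrence $c_{n+1,s}=c_{n,s-1}-(k+n+s)\varphi\, c_{n,s}$, and substituting the claimed closed form reduces the step to the scalar identity
\[ \binom{n}{s-1}(k+s-1)_{n-s+1}+(k+n+s)\binom{n}{s}(k+s)_{n-s}=\binom{n+1}{s}(k+s)_{n+1-s}. \]
After factoring out $(k+s)_{n-s}$ (using $(k+s-1)_{n-s+1}=(k+s-1)(k+s)_{n-s}$ and $(k+s)_{n+1-s}=(k+n)(k+s)_{n-s}$) this collapses to $\binom{n}{s-1}(k+s-1)+(k+n+s)\binom{n}{s}=(k+n)\binom{n+1}{s}$, which follows from Pascal's rule $\binom{n+1}{s}=\binom{n}{s}+\binom{n}{s-1}$ and the elementary relation $s\binom{n}{s}=(n+1-s)\binom{n}{s-1}$. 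This is the computational heart, but it is routine.

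For the second identity I would run the same induction, applying $D$ to $D^n f=\sum_r d_{n,r}\partial^r f$. The only new input is that on a function of weight $k+2r$ one has $D=\partial_{k+2r}+(k+2r)\varphi$, so that $D(\partial^r f)=\partial^{r+1}f+(k+2r)\varphi\,\partial^r f$; combined with $D d_{n,r}=(n-r)\varphi\, d_{n,r}$ this produces $d_{n+1,s}=d_{n,s-1}+(k+n+s)\varphi\, d_{n,s}$, which differs from the previous recurrence only by the sign of $\varphi$ and is settled by the same scalar identity. Alternatively, one can observe that the two relations are mutually inverse unipotent triangular systems, so either follows formally from the other; but the parallel induction is cleaner and avoids a separate hypergeometric summation. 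The main obstacle, as noted, is purely the sign bookkeeping in $\partial_k$ and $D\varphi$, together with the correct weight index $k+2n$ (resp.\ $k+2r$) at each stage of the composition.
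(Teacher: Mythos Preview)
Your induction argument is correct. The paper itself does not prove this lemma; it merely cites Rodriguez--Villegas and Zagier, Zagier's survey, and O'Sullivan--Risager, so there is nothing to compare against directly. Your approach---writing $\partial_k = D - k\varphi$ with $\varphi=1/(4\pi y)$, using $D\varphi=\varphi^2$, and extracting the recurrence $c_{n+1,s}=c_{n,s-1}-(k+n+s)\varphi\,c_{n,s}$---is exactly the standard proof one finds in those references, and your verification of the resulting binomial/Pochhammer identity is clean.

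One small remark: the ``sign error'' comment in the paper is attached to Lemma~\ref{powcn} (Zagier's Proposition~17), not to this lemma, so your aside about it is slightly misplaced; but this has no bearing on the mathematics. Also, your closing comment that ``either follows formally from the other'' via inverting the unipotent triangular system is true, but you are right that doing so cleanly requires a hypergeometric summation (a Chu--Vandermonde-type identity), so the parallel induction is indeed the more economical route.
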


\subsection*{Arithmetic Fuchsian groups}

Among the Fuchsian groups we consider, of particular interest are the arithmetic Fuchsian groups.  A basic reference is Vign\'eras \cite{Vigneras}; see also work of the first author \cite{Voight-algs} for an algorithmic perspective.

Let $F$ be a number field with ring of integers $\Z_F$.  A \emph{quaternion algebra} $B$ over $F$ is an $F$-algebra with generators $\alpha,\beta \in B$ such that 
\[ \alpha^2=a, \quad \beta^2=b, \quad \beta\alpha=-\alpha\beta \]
with $a,b \in F^\times$; such an algebra is denoted $B=\quat{a,b}{F}$. 

Let $B$ be a quaternion algebra over $F$.  Then $B$ has a unique (anti-)involution $\overline{\phantom{x}}:B \to B$ such that the \emph{reduced norm} $\nrd(\gamma)=\gamma\overline{\gamma}$ belongs to $F$ for all $\gamma \in B$.  A place $v$ of $F$ is \emph{split} or \emph{ramified} according as $B_v = B \otimes_F F_v \cong M_2(F_v)$ or not, where $F_v$ denotes the completion at $v$.  The set $S$ of ramified places of $B$ is finite and of even cardinality, and the product $\frakD$ of all finite ramified places is called the \emph{discriminant} of $B$.  

Now suppose that $F$ is a totally real field and that $B$ has a unique split real place $v \not\in S$ corresponding to $\iota_\infty:B \hookrightarrow B \otimes F_v \cong \M_2(\R)$.  An \emph{order} $\calO \subset B$ is a subring with $F\calO = B$ that is finitely generated as a $\Z_F$-submodule.  Let $\calO \subset B$ be an order and let $\calO_1^*$ denote the group of units of reduced norm $1$ in $\calO$.  Then the group $\Gamma^B(1)=\iota_\infty(\calO_1^*/\{\pm 1\}) \subset \PSL_2(\R)$ is a Fuchsian group \cite[\S\S 5.2--5.3]{Katok}.  An \emph{arithmetic Fuchsian group} $\Gamma$ is a Fuchsian group commensurable with $\Gamma^B(1)$ for some choice of $B$.  One can, for instance, recover the usual modular groups in this way, taking $F=\Q$, $\calO=M_2(\Z) \subset M_2(\Q)=B$, and $\Gamma \subset \PSL_2(\Z)$ a subgroup of finite index.  An arithmetic Fuchsian group $\Gamma$ is cofinite and even cocompact, as long as $B \not\cong M_2(\Q)$, which we will further assume.  In particular, the fundamental domain algorithm of Theorem \ref{funddom} applies.  

Let $\frakN$ be an ideal of $\Z_F$ coprime to $\frakD$.  Define
\[ \calO(\frakN)_1^\times = \{\gamma \in \calO : \gamma \equiv 1 \psmod{\frakN\calO}\} \]
and let $\Gamma^B(\frakN)=\iota_\infty(\calO(\frakN)_1^\times)$.  A Fuchsian group $\Gamma$ commensurable with $\Gamma^B(1)$ is \emph{congruence} if it contains $\Gamma^B(\frakN)$ for some $\frakN$.  

The space $M_k(\Gamma)$ has an action of Hecke operators $T_\frakp$ indexed by the prime ideals $\frakp \nmid \frakD\frakN$ that belong to the principal class in the narrow class group of $\Z_F$.  (More generally, one must consider a direct sum of such spaces indexed by the narrow class group of $F$.)  The Hecke operators can be understood as averaging over sublattices, via correspondences, or in terms of double cosets; for a detailed algorithmic discussion in this context and further references, see work of Greenberg and the first author \cite{GV} and Demb\'el\'e and the first author \cite{DV}.  The operators $T_\frakp$ are semisimple and pairwise commute so there exists a basis of simultaneous eigenforms in $M_k(\Gamma)$ for all $T_\frakp$.  

Let $K$ be a totally imaginary quadratic extension of $F$ that embeds in $B$, and let $\nu \in B$ be such that $F(\nu) \cong K$.  Let $p \in \calH$ be a fixed point of $\iota_\infty(\nu)$.  Then we say $p$ is a \emph{CM point} for $K$.  

\begin{theorem} \label{Shimuraalg}
There exists $\Omega \in \C^\times$ such that for every CM point $p$ for $K$, every congruence subgroup $\Gamma$ commensurable with $\Gamma^B(1)$, and every eigenform $f \in M_k(\Gamma)$ with $f(p) \in \Qbar^\times$, we have
\[ \frac{(\partial^n f)(p)}{\Omega^{2n}} \in \Qbar \]
for all $n \in \Z_{\geq 0}$.
\end{theorem}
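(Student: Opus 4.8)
The plan is to follow Shimura's algebraic theory of the operator $\partial_k$, interpreting modular forms and their Shimura-Maass derivatives as sections of vector bundles on the Shimura curve $X = \Gamma \backslash \calH$ and tracking the single transcendental period $\Omega$ that intervenes when one passes between the analytic and algebraic trivializations at a CM point. First I would realize $X$ (for $\Gamma$ commensurable with $\Gamma^B(1)$) together with its canonical model over a number field and the universal abelian variety $A \to X$ with quaternionic multiplication. Writing $H = H^1_{\mathrm{dR}}(A/X)$ for the rank-$2$ summand cut out by the quaternionic action, a bundle equipped with the algebraic Gauss-Manin connection $\nabla$ and Hodge line $\omega \subset H$, the Kodaira-Spencer isomorphism gives $\omega^{\otimes 2} \cong \Omega^1_X$. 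A weight-$k$ modular form is then a section of $\omega^{\otimes k} = \mathrm{Fil}^k\,\mathrm{Sym}^k H$, and I would express $\partial_k$ as the composite of $\nabla$ (landing in $\mathrm{Sym}^k H \otimes \omega^{\otimes 2}$ via Kodaira-Spencer) with the projection coming from the $C^\infty$ Hodge decomposition $\mathrm{Sym}^k H = \bigoplus_{j} \omega^{\otimes(k-j)} \otimes \overline{\omega}^{\otimes j}$. The crucial structural observation is that $\nabla$ is defined over $\Qbar$, so all of the non-algebraicity of $\partial^n$ is concentrated in this one $C^\infty$ projection.

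Next I would specialize at a CM point $p$. Since $A_p$ has complex multiplication by $K$, the action of $K \otimes_F \R \cong \C$ splits $H^1_{\mathrm{dR}}(A_p)$ into eigenlines over $\Qbar$, and for a CM abelian variety this algebraic eigenspace decomposition coincides with the analytic Hodge decomposition at $p$. Hence at $p$ the projection used to define $\partial_k$ becomes an algebraic operation, so iterating, $(\partial^n f)(p)$ is the value at $p$ of a $\Qbar$-rational section of $\omega^{\otimes(k+2n)}$. To turn these sections into complex numbers one must trivialize $\omega$ at $p$; the comparison between the algebraic trivialization (a N\'eron differential on $A_p$, defined over $\Qbar$) and the analytic one introduces exactly one factor of the CM period $\Omega = \int_\gamma \omega_p$ per power of the weight. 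I would thereby obtain $f(p) = \Omega^k \alpha_0$ and $(\partial^n f)(p) = \Omega^{k+2n}\alpha_n$ with $\alpha_0, \alpha_n \in \Qbar$. The hypothesis $f(p) \in \Qbar^\times$ forces $\Omega^k \alpha_0 \in \Qbar^\times$, and then
\[ \frac{(\partial^n f)(p)}{\Omega^{2n}} = \Omega^k \alpha_n = f(p)\,\frac{\alpha_n}{\alpha_0} \in \Qbar, \]
which is the claim.

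It remains to arrange that a single $\Omega$ works uniformly for all CM points for $K$, all congruence $\Gamma$, and all weights. For the $\Gamma$-independence I would note that passing through the commensurability class only changes the picture by algebraic correspondences, which act $\Qbar$-rationally; for the $p$-independence I would use that any two abelian varieties with CM by orders in $K$ are isogenous over $\Qbar$, so their periods differ by a factor in $\Qbar^\times$, and I may fix $\Omega$ to be the period of one abelian variety with CM by $\calO_K$ (the Chowla-Selberg period for $K$). The eigenform hypothesis is used only to guarantee, via the rationality of the Hecke eigenvalues and the $\Qbar$-rational structure on $M_k(\Gamma)$ supplied by the canonical model, that $f$ may be taken defined over $\Qbar$, so that $\alpha_0$ and $\alpha_n$ genuinely land in $\Qbar$ rather than in a transcendental extension.

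The main obstacle is the heart of Shimura's theorem: proving that at a CM point the $C^\infty$ Hodge splitting agrees with the algebraic CM eigenspace splitting, and quantifying the resulting analytic-versus-algebraic discrepancy as precisely a power of one period $\Omega$ that is independent of $f$, $k$, $n$, $p$, and $\Gamma$. Making the Kodaira-Spencer identification and the period normalization compatible across the whole commensurability class --- keeping careful track of reflex fields and of the algebraicity of isogenies --- is where the real work lies; the remaining bookkeeping with $\mathrm{Sym}^k H$ and the projection is essentially the inductive computation already recorded in Lemma \ref{Dvspartial}.
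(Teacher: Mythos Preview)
The paper does not prove this theorem at all: it is stated as a result of Shimura, and the paragraph following the statement simply points the reader to the relevant references, principally \cite[Theorem 7.6]{Shimura3}, together with \cite{Shimura1,Shimura2,Shimuracollected}. So there is nothing in the paper to compare your argument against beyond a citation.

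That said, your outline is a coherent and essentially correct modern account of why the result holds, in the spirit of Katz's algebraic-geometric reinterpretation of the Shimura--Maass operator and of the treatment in \cite[\S\S 5--6]{Prasanna} (which the paper also cites). The key ingredients you identify --- realizing $\partial_k$ via the Gauss--Manin connection and Kodaira--Spencer, the fact that at a CM point the $C^\infty$ Hodge splitting coincides with the algebraic CM eigenspace decomposition, and the reduction of all transcendence to a single period $\Omega$ attached to a CM abelian variety for $K$ --- are exactly the right ones. Your handling of the uniformity in $p$ and $\Gamma$ (via isogenies over $\Qbar$ and algebraic correspondences) is also the standard reduction. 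If you were actually writing this up, the substantive work you flag at the end is genuine: making the comparison of analytic and algebraic trivializations precise, and checking compatibility across the commensurability class, is where Shimura's original arguments (or the geometric reformulation) do the heavy lifting. For the purposes of this paper, however, the authors are content to invoke the result as a black box, so your sketch already goes well beyond what they provide.
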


The work of Shimura on this general theme of arithmetic aspects of analytic functions is vast.  For a ``raw and elementary'' presentation, see his work \cite[Main Theorem I]{Shimura1} on the algebraicity of CM points of derivatives of Hilbert modular forms with algebraic Fourier coefficients.  He then developed this theme with its connection to theta functions by a more detailed investigation of derivatives of theta functions \cite{Shimura2}.  For the statement above, see his further work on the relationship between the periods of abelian varieties with complex multiplications and the derivatives of automorphic forms \cite[Theorem 7.6]{Shimura3}.  However, these few references barely scratch the surface of this work, and we refer the interested reader to Shimura's collected works \cite{Shimuracollected} for a more complete treatment.

Note that $\Omega$ is unique only up to multiplication by an element of $\Qbar^\times$.  When $F=\Q$, the period $\Omega=\Omega_K$ can be taken as the \emph{Chowla-Selberg period} associated to the imaginary quadratic field $K$, given in terms of a product of values of the $\Gamma$-function \cite[(97)]{Zagier}. (The Chowla-Selberg theory becomes much more complicated over a general totally real field; see e.g.\ Moreno \cite{Moreno}.)  In the case where $q$-expansions are available, one typically normalizes the form $f$ to have algebraic Fourier coefficients, in which case $f(p) \in \Omega^{k}\Qbar$; for modular forms on Shimura curves, it seems natural instead to normalize the function so that its values at CM points for $K$ are algebraic.  

For the rest of this section, suppose that $\Gamma$ is a congruence arithmetic Fuchsian group containing $\Gamma^B(\frakN)$.  Then from Lemma \ref{powcn}, we see that if $f \in M_k(\Gamma)$ and as in Theorem \ref{Shimuraalg}, then the power series (\ref{powseries}) at a CM point $p$ can be rewritten as
\begin{equation} \label{normalizeOmega}
 f(z)=f(p)(1-w)^k \sum_{n=0}^{\infty} \frac{c_n}{n!} (\Theta w)^n
\end{equation}
where 
\begin{equation}
\Theta=-4\pi y \frac{(\partial f)(p)}{f(p)} = \frac{b_1}{b_0} ,
\end{equation}
and
\begin{equation}
c_n = \frac{(\partial^n f)(p)}{f(p)} \left(\frac{f(p)}{(\partial f)(p)}\right)^n = \frac{1}{n!} \frac{b_n}{b_0} \left(\frac{b_0}{b_1}\right)^n \in \Qbar 
\end{equation}
are algebraic.  In Section 3, when these hypotheses apply, we will use this extra condition to verify that our results are correct.

\begin{remark}
Returning to Remark \ref{recurrence}, for a congruence group $\Gamma$, it follows that once we have computed just a finite number of the coefficients $c_n$ for a finite generating set of $M(\Gamma)$ to large enough precision to recognize them algebraically and thus exactly, following Zagier we can compute the differential structure of $M(\Gamma)$ and thereby compute exactly all coefficients of all forms in $M(\Gamma)$ rapidly.  It would be very interesting, and quite powerful, to carry out this idea in a general context.
\end{remark}

\section{Numerical method}

In this section, we exhibit our method to compute the finite-dimensional space of (holomorphic) modular forms $M_k(\Gamma)$ of weight $k \in 2\Z_{\geq 0}$ for a cocompact Fuchsian group $\Gamma$.  Since the only forms of weight $0$ are constant, we assume $k>0$.  

\begin{remark}
To obtain meromorphic modular forms of weight $0$ (for example) we can take ratios of holomorphic modular forms of the same weight $k$.  Or, alternatively, since the Shimura-Maass derivative of a meromorphic modular form of weight $0$ is in fact meromorphic of weight $2$ ($k=0$ in the definition), it follows that the antiderivative of a meromorphic modular form of weight $2$ is meromorphic of weight $0$.  
\end{remark}

Let $\eps>0$ be the desired precision in the numerical approximations.  We write $x \approx y$ to mean approximately equal (to precision $\eps$); we leave the task of making these approximations rigorous as future work.

Let $p \in \calH$ satisfy $\Gamma_p=\{1\}$ and let $D=D(p) \subset \calD$ be the Dirichlet domain centered at $p$ as in Theorem \ref{funddom}, equipped with an algorithm that given a point $w \in \calD$ computes $g \in\Gamma$ such that $w'=g w \in D$.  Let 
\[ \rho=\rho(D)=\max\{|w| : w \in D\} \]
be the radius of $D$.  

\subsection*{Approximation by a polynomial}

The series expansion (\ref{powseries}) for a form $f \in M_k(\Gamma)$ converges in the unit disc $\calD$ and its radius of convergence is $1$.  To estimate the degree of a polynomial approximation for $f$ valid in $D$ to precision $\eps$, we need to estimate the sizes of the coefficients $b_n$.  

We observe in some experiments that the coefficients are roughly bounded.  If we take this as a heuristic, assuming $|b_n| \leq 1$ for all $n$, then the approximation
\[ f(z) \approx f_N(z)=(1-w)^k\sum_{n=0}^{N} b_n w^n \]
is valid for all $|w|\leq \rho$ with
\begin{equation} \label{valueforN}
 N = \left\lceil\frac{\log(\epsilon)}{\log \rho}\right\rceil.
\end{equation}

\begin{remark} \label{canweproveit}
As mentioned in the introduction, if $p$ is a CM point and $f$ is an eigenform for a congruence group, then the coefficient $b_n$ is related to the central critical value of the Rankin-Selberg $L$-function $L(s, f \times \theta^n)$, where $\theta$ is the modular form associated to a Hecke character for the CM extension given by $p$.  Therefore, the best possible bounds on these coefficients will involve (sub)convexity bounds for these central $L$-values (in addition to estimates for the other explicit factors that appear).
\end{remark}

In practice, we simply increase $N$ in two successive runs and compare the results in order to be convinced of their accuracy.  Indeed, a posteriori, we can go back using the computed coefficients to give a better estimate on their growth and normalize them so that the boundedness assumption is valid.  In this way, we only use this guess for $N$ in a weak way.

\subsection*{Relations from automorphy}

The basic idea now is to use the modularity of $f$, given by (\ref{transform}), at points inside and outside the fundamental domain to obtain relations on the coefficients $b_n$.  

For any point $w \in \calD$ with $|w|\leq \rho$ and $z=z(w)$, we can compute $g \in \Gamma$ such that $w'=g w \in D$; letting $z'=z(w')$, by the modularity of $f$ we have
\[  f_N(z') \approx f(z') = j(g,z)^k f(z) \approx j(g,z)^k f_N(z) \]
so
\begin{equation}
(1-w')^k\sum_{n=0}^N b_n (w')^n \approx j(g,z)^k(1-w)^k\sum_{n=0}^N b_n w^n
\end{equation}
and thus
\begin{equation}  \label{bigdealrelation}
\sum_{n=0}^N K^{\textup{a}}_n(w) b_n \approx 0
\end{equation}
where (``a'' for automorphy)
\begin{equation}
K^{\textup{a}}_n(w) = j(g,z)^k(1-w)^k w^n - (1-w')^k (w')^n.
\end{equation}
If $w$ does not belong to the fundamental domain $D$, so that $g \neq 1$, then this imposes a nontrivial relation on the coefficients $b_n$.  With enough relations, we then use linear algebra to recover the coefficients $b_n$.

\begin{remark} \label{norm}
In practice, it is better to work not with $f(z)$ but instead the \emph{normalized} function 
\[ f_{\textup{norm}}(z)=f(z) (\impart z)^{k/2} \] 
since then we have the transformation formula
\begin{equation} \label{transformabs1}
f_{\textup{norm}}(gz)=f(gz)(\impart gz)^{k/2} = j(g,z)^k f(z) \left(\frac{\impart z}{|cz+d|^2}\right)^{k/2} = \left(\frac{j(g,z)}{|j(g,z)|}\right)^k f_{\textup{norm}}(z)
\end{equation}
and the automorphy factor has absolute value $1$.  In particular, $f_{\textup{norm}}$ is bounded on $\calD$ (since it is bounded on the fundamental domain $D$), so working with $f_{\textup{norm}}$ yields better numerical stability if $\rho$ is large, since then the contribution from the automorphy factor may otherwise be quite large.  Although $f_{\textup{norm}}$ is no longer holomorphic, this does not affect the above method in any way.  
\end{remark}

\subsection*{Using the Cauchy integral formula}

An alternative way to obtain the coefficients $b_n$ is to use the Cauchy integral formula: for $n \geq 0$, we have
\[ b_n = \frac{1}{2\pi i} \oint \frac{f(z)}{w^{n+1}(1-w)^k}\,dw \]
with the integral a simple contour around $0$.  If we take this contour to be a circle of radius $R\geq \rho$, then again using automorphy, Cauchy's integral is equivalent to one evaluated along a path in $D$ where the approximation $f(z) \approx f_N(z)$ holds; then using techniques of numerical integration we obtain a nontrivial linear relation among the coefficients $b_n$. 

The simplest version of this technique arises by using simple Riemann summation.  Letting $w=\rho e^{i\theta}$ and $dw = i\rho e^{i\theta}\,d\theta$, we have
\[ b_n = \frac{1}{2\pi} \int_0^{2\pi} \frac{f(\rho e^{i\theta})}{(\rho e^{i\theta})^{n}(1-\rho e^{i\theta})^{k}}\,d\theta; \]
again breaking up $[0,2\pi]$ into $Q \in \Z_{\geq 3}$ intervals, letting $w_m=\rho e^{2\pi m i/Q}$ and $w_m'=g_m w_m$ with $w_m' \in D$, and $z_m' = z(p;w_m')$ and $z_m = z(p;w_m)$, we obtain
\[ b_n \approx \frac{1}{Q} \sum_{m=1}^{Q} \frac{f(z_m)}{w_m^n(1-w_m)^k} \approx \frac{1}{Q} \sum_{m=1}^{Q} \frac{ j(g_m, z_m)^{-k} f_N(z_m')}{w_m^n(1-w_m)^k} \]
and expanding $f_N(z)$ we obtain 
\begin{equation} \label{bigdealrelationriemann}
b_n \approx \sum_{r=0}^N K^{\textup{c}}_{nr} b_r
\end{equation}
where (``c'' for Cauchy)
\[ K^{\textup{c}}_{nr} = \frac{1}{Q} \sum_{m=1}^Q \frac{j(g_m,z_m)^{-k}}{w_m^n(1-w_m)^k} (w_m')^r \]
with an error in the approximation that is small if $Q$ is large.  The matrix $K^{\textup{c}}$ with entries $K^{\textup{c}}_{nr}$, with rows indexed by $n=0,\dots,N$ and columns indexed by $r=0,\dots,N$, is obtained from (\ref{bigdealrelationriemann}) by a matrix multiplication:
\begin{equation} \label{matrixmult} Q K^{\textup{c}} = J W'
\end{equation}
where $J$ is the matrix with entries
\begin{equation} \label{defJnm}
 J_{nm}=\frac{j(g_m,z_m)^{-k}}{w_m^n(1-w_m)^k} 
\end{equation}
with $0 \leq n \leq N$ and $1 \leq m \leq Q$ and $W'$ is the Vandermonde matrix with entries
\[ W'_{mr} = (w_m')^r \]
with $1 \leq m \leq Q$ and $0 \leq r \leq N$.  The matrices $J$ and $W'$ are fast to compute, so computing $K^{\textup{c}}$ comes essentially at the cost of one matrix multiplication.  The column vector $b$ with entries $b_n$ satisfies $K^{\textup{c}} b \approx b$, so $b$ is approximately in the kernel of $K^{\textup{c}}-1$.  

\begin{remark}
For large values of $n$, the term $w_m^n$ in the denominator of (\ref{defJnm}) dominates and creates numerical instability.  Therefore, instead of solving for the coefficients $b_n$ we write $b_n'=b_n \rho^n$ and
\[ f(z) = (1-w)^k \sum_{n=0}^{\infty} b_n' (w/\rho)^n \]
and solve for the coefficients $b_n'$.  This replaces $w_m$ by $w_m/\rho$ with absolute value $1$.  We then further scale the relation (\ref{bigdealrelationriemann}) by $1/(K^{\textup{c}}_{nn}-1)$ so that the coefficient of $b_n$ in the relation is equal to $1$; in practice, we observe that the largest entry in absolute value in the matrix of relations occurs along the diagonal, yielding quite good numerical stability.
\end{remark}

Better still is to use Simpson's rule:  Break up the interval $[0,2\pi]$ into $2Q+1\in\Z_{\geq 3}$ intervals of equal length.  With the same notation as above we then have
\begin{equation} \label{simpson}
\begin{aligned}
 b_n \approx \frac{1}{6Q}\sum_{m=1}^{Q}\left(\frac{j(g_{2m-1},z_{2m-1})^{-k}f_N(w'_{2m-1})}{w_{2m-1}^n(1-w_{2m-1})^k}\right. &+
	 4\frac{j(g_{2m},z_{2m})^{-k}f_N(w'_{2m})}{w_{2m}^n(1-w_{2m})^k} \\
&+ \left. \frac{j(g_{2m+1},z_{2m+1})^{-k}f_N(w'_{2m+1})}{w_{2m+1}^n(1-w_{2m+1})^k}\right).
\end{aligned}
\end{equation}
We obtain relations analogous to (\ref{bigdealrelationriemann}) and (\ref{matrixmult}) in a similar way, the latter as a sum over three factored matrices.  Then we have
	\[b_n \approx \sum_{r=0}^NL_{nr}^cb_r\]
where 
\begin{equation*}
\begin{aligned}
	L_{nr}^c = \frac{1}{6Q}\sum_{m=1}^Q\left(\frac{j(g_{2m-1},z_{2m-1})^-k}{w_{2m-1}^n(1-w_{2m-1})^k}(w_{2m-1}')^r\right. &+ 4\frac{j(g_{2m},z_{2m})^-k}{w_{2m}^n(1-w_{2m})^k}(w_{2m}')^r\\
	 &+ \left.\frac{j(g_{2m+1},z_{2m+1})^-k}{w_{2m+1}^n(1-w_{2m+1})^k}(w_{2m+1}')^r\right).
\end{aligned}
\end{equation*}
Then, letting $L^c$ denote the matrix formed by $L_{nr}^c$ with rows indexed by $n$ and columns indexed by $r$, we have that $L^cb\approx b$, where $b$ is the column vector having as its entries the $b_n$, and hence $b$ is in the numerical kernel of $L^c-1$.

Simpson's rule is fast to evaluate and gives quite accurate results and so is quite suitable for our purposes.  In very high precision, one could instead use more advanced techniques for numerical integration; each integral in $D$ can be broken up into a finite sum with contours given by geodesics.

\begin{remark}
The coefficients $b_n$ for $n > N$ are approximately determined by the coefficients $b_n$ for $n \leq N$ by the relation (\ref{bigdealrelationriemann}).  In this way, they can be computed using integration and without any further linear algebra step.
\end{remark}

\subsection*{Hecke operators}

In the special situation where $\Gamma$ is a congruence arithmetic Fuchsian group, we can impose additional linear relations on eigenforms by using the action of the Hecke operators, as follows.

We use the notation introduced in Section 1.  Suppose that $f$ is an eigenform for $\Gamma$ (still of weight $k$) with $\Gamma$ of level $\frakN$.  Let $\frakp$ be a nonzero prime of $\Z_F$ with $\frakp \nmid \frakD\frakN$.  

Using methods of Greenberg and the first author \cite{GV}, we can compute for every prime $\frakp \nmid \frakD\frakN$ the Hecke eigenvalue $a_\frakp$ of an eigenform $f$ using explicit methods in group cohomology.  In particular, for each $\frakp$ we compute elements $\pi_1,\dots,\pi_q \in \PSL_2(K) \subset \PSL_2(\R)$ with $q=N\frakp+1$ so that the action of the Hecke operator $T_\frakp$ is given by
\begin{equation}\label{smalldealrelationhecke}
	(T_\frakp f)(z) = \sum_{i=1}^q j(\pi_i,z)^{-k} f(\pi_i z) = a_\frakp f(z).
\end{equation}
Taking $z=p$, for example, and writing $w_{\frakp,i}=w(\pi_i p)$ for $i=1,\dots,q$ and $w_{\frakp,i}'=g_i w_i \in D$ as before, we obtain
\[ a_\frakp f(0) \approx \sum_{i=1}^q j(\pi_i,p)^{-k} j(g_i, w_{\frakp,i})^{-k} f(z_{\frakp,i}'). \]
Expanding $f$ as a series in $w$ as before, we find
\begin{equation} \label{bigdealrelationhecke}
a_\frakp f(0) \approx \sum_{n=0}^N K_n^{\textup{h}} b_n 
\end{equation}
where (``h'' for Hecke)
\[ K_n^{\textup{h}} = \sum_{i=1}^q j(\pi_i,p)^{-k} j(g_i, w_{\frakp,i})^{-k} (1-w_{\frakp,i}')^k (w_{\frakp,i}')^n. \] 

One could equally well consider the relations induced by plugging in other values of $z$, but in our experience are not especially numerically stable; rather, we use a few Hecke operators to isolate a one-dimensional subspace in combination with those relations coming from modularity.

\begin{remark}
Alternatively, having computed the space $M_k(\Gamma)$, one could turn this idea around and use the above relations to compute the action of the Hecke operators purely analytically!  For each $f$ in a basis for $M_k(\Gamma)$, we evaluate $T_\frakp f$ at enough points to write $T_\frakp f$ in terms of the basis, thereby giving the action of $T_\frakp$ on $M_k(\Gamma)$.
\end{remark}

\subsection*{Derivatives}


Thus far, to encode the desired relationships on the coefficients, we have used the action of the group (i.e. automorphy) and Hecke operators.  We may obtain further relationships obtained from the Shimura-Maass derivatives. 
The following lemma describes the action of $\partial$ on power series expansions (see also Datskovsky and Guerzhoy \cite[Proposition 2]{DatsGuerz}).

\begin{lemma}
	Let $f\in M_k(\Gamma)$ and $p\in \calH$.  Then 
		$$(\partial^m f)(z) = \sum_{r=0}^m\begin{pmatrix}m\\r\end{pmatrix}\frac{(m+r)_{k-r}s_{m-r}(w)}{(4\pi)^{m-r}}(1-w)^{k+2r}\sum_{n=0}^{\infty}\frac{(\partial^{n+r} f)(p)}{n!}(-4\pi y)^n w^n$$
where $y=\impart p$ and
	$$s_n(w) = \sum_{t=0}^n(-1)^{n-t}\begin{pmatrix}n\\t\end{pmatrix}\frac{(1-w)^t}{y^t \impart(z)^{n-t}}.$$
\end{lemma}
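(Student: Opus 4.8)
The plan is to prove the identity by induction on $m$, applying the Shimura--Maass operator repeatedly to the power series expansion \textup{(\ref{powseries})} of $f$ and recording the result in the disc coordinate $w=w(p;z)$. For $m=0$ the asserted identity reduces, once one reads the Pochhammer factor consistently with Lemma \ref{Dvspartial} (so that the $r$-term carries $(k+r)_{m-r}$, giving $(k)_0=1$) and notes $s_0(w)=1$, to the expansion of Lemma \ref{powcn}: by that lemma the inner sum $\sum_{n} \frac{(\partial^{n+r}f)(p)}{n!}(-4\pi y)^n w^n$ is exactly $(-4\pi y)^{-r}$ times the $r$-th $w$-derivative of the holomorphic series $h(w)=\sum_n b_n w^n$ determined by $f(z)=(1-w)^k h(w)$. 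Thus the whole statement may be read as a closed expression for $\partial^m f$ as a finite sum $\sum_{r=0}^m c_{m,r}(w)\,h^{(r)}(w)$, and the goal is to identify the coefficient functions $c_{m,r}$.

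First I would assemble the facts about the coordinate change. Since $w$ is holomorphic in $z$ with $1-w=(p-\overline p)/(z-\overline p)$, a short computation gives $\frac{dw}{dz}=\frac{(1-w)^2}{p-\overline p}$, so that the ordinary derivative becomes $D=\frac{1}{2\pi i}\frac{d}{dz}=-\frac{(1-w)^2}{4\pi y}\frac{d}{dw}$ with $y=\impart p$ a constant; in particular $D$ preserves holomorphy and raises the index of $h$. Likewise $z-\overline z=(p-\overline p)(1-|w|^2)/|1-w|^2$, whence $\impart z = y(1-|w|^2)/|1-w|^2$. The pivotal observation is that $s_n$ is a pure $n$-th power: by the binomial theorem $s_n(w)=\bigl(\tfrac{1-w}{y}-\tfrac{1}{\impart z}\bigr)^n$, and using the formula for $\impart z$ this equals $\bigl(\tfrac{(1-w)^2\overline w}{y(1-|w|^2)}\bigr)^n$, displaying $s_n$ as the purely non-holomorphic part of the expansion.

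For the inductive step I would apply $\partial_{k+2m}=D+\frac{k+2m}{2\pi i(z-\overline z)}$ to $\partial^m f=\sum_{r}c_{m,r}(w)h^{(r)}(w)$. The holomorphic piece $D$ acts by the product rule: on $h^{(r)}$ it produces $\frac{dw}{dz}h^{(r+1)}$, feeding the $h^{(r+1)}$-coefficient, while on $c_{m,r}$ it differentiates the explicit powers of $(1-w)$ and of $s_{m-r}$; the weight piece multiplies by a factor proportional to $1/\impart z$. Collecting by powers $h^{(r)}$ yields a three-term recurrence expressing $c_{m+1,r}$ through $c_{m,r}$ and $c_{m,r-1}$, and verifying that its solution is the asserted product of a binomial coefficient, the Pochhammer symbol, the power $s_{m-r}(w)$, and $(1-w)^{k+2r}$ closes the induction. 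The conceptual heart is that the two sources of contributions---the $\tfrac{1-w}{y}$ coming from $D$ and the $-\tfrac{1}{\impart z}$ coming from the weight term---recombine, via the binomial theorem, into the single power $s_{m-r}=\bigl(\tfrac{1-w}{y}-\tfrac{1}{\impart z}\bigr)^{m-r}$. An equivalent route is to expand $\partial^m f$ first through Lemma \ref{Dvspartial} in terms of the $D^r f$, compute each $D^rf=\bigl(-\tfrac{(1-w)^2}{4\pi y}\tfrac{d}{dw}\bigr)^r[(1-w)^k h]$ in closed form, and resum.

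The main obstacle is bookkeeping but genuinely delicate: one must keep the constant $y=\impart p$ (entering through $\frac{dw}{dz}$ and the coefficients $b_n$) rigorously separate from the variable $\impart z$ (entering through the weight term), and then carry out the binomial/Pochhammer manipulation that merges the two into $s_{m-r}$. Getting every constant, sign, and Pochhammer index correct---consistent with the conventions of Lemma \ref{Dvspartial}, and in light of the sign subtlety already flagged for \cite{Zagier}---is where essentially all the effort lies; the analytic content is entirely captured by the elementary identities for $\frac{dw}{dz}$, $\impart z$, and $s_n$ recorded above.
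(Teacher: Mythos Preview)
Your proposal is correct and follows essentially the same route as the paper: both argue by induction on $m$, apply $\partial_{k+2m}$ to the inductive hypothesis using $dw/dz=(1-w)^2/(2iy)$, and check that the resulting terms recombine to advance $s_{m-r}$ to $s_{m+1-r}$. Your observation that $s_n(w)=\bigl(\tfrac{1-w}{y}-\tfrac{1}{\impart z}\bigr)^n$ is a pure $n$-th power is a genuine streamlining the paper does not make explicit: it renders transparent the key identity the paper states, namely that $\frac{1}{2\pi i}\frac{d}{dz}s_{m-r}+\frac{(k+2r)(1-w)}{4\pi y}s_{m-r}-\frac{k+2m}{4\pi\impart z}s_{m-r}=\frac{k+m+r}{4\pi}s_{m+1-r}$, and you are also right to flag that the Pochhammer factor should be read as $(k+r)_{m-r}$ in line with Lemma~\ref{Dvspartial}.
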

\begin{proof}
The result holds for $m=1$ by Lemma \ref{powcn}.  Now suppose that the Lemma holds up to some positive integer $m \geq 1$.  We have
\[ (\partial^{m+1} f)(z) = \partial_{k+2m}(\partial^m f)(z) = \left(\frac{1}{2\pi i}\frac{d}{dz} - \frac{k+2m}{4\pi \impart(z)}\right)(\partial^m f)(z).  \] 
Hence, after substituting for $(\partial^m f)(z)$ and observing that $dw/dz = (1-w)^2/(2iy)$, we obtain an expression for $(\partial^{m+1} f)(z)$, in which we observe that for all $n\in\Z_{\geq0}$
	$$\frac{1}{2\pi i}\frac{ds_n(w)}{dz} = \frac{1}{4\pi}\sum_{t=0}^n(-1)^{n-t}\begin{pmatrix}n\\t\end{pmatrix}\frac{(1-w)^t}{y^t\impart(z)^{n-t}}\left[\frac{t(1-w)}{y} + \frac{n-t}{\impart(z)}\right].$$
Utilizing this in conjunction with
\begin{align*}
	\frac{1}{2\pi i}\frac{ds_{m-r}(w)}{dz} &+ \frac{(k+2r)s_{m-r}(w)(1-w)}{4\pi y} - \frac{(k+2m)s_{m-r}(w)}{4\pi Im(z)}\\
	& = \frac{(k+m+r)s_{m+1-r}(w)}{4\pi},
\end{align*}
we obtain the result for $\partial^{m+1}$, from which the result follows by induction.
\end{proof}

For an eigenform $f \in M_k(\Gamma)$, we can further use the Hecke operators in conjunction with the Shimura-Maass derivatives.

\begin{proposition}[Beyerl, James, Trentacose, Xue \cite{BJCX}]
Let $f\in M_k(\Gamma)$.  Then $\partial_k^{m}(f)$ is a Hecke eigenform if and only if $f$ is and eigenform; if so, and $a_n$ is the eigenvalue of $T_n$ associated to $f$, then the eigenvalue of $T_n$ associated to $\partial_k^{m}(f)$ is $n^m a_n$.
\end{proposition}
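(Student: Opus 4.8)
\noindent\emph{Proof sketch.}
The engine of the proof is a commutation relation between the Shimura--Maass operator $\partial_k$ and the Hecke operators, which itself rests on the equivariance of $\partial_k$ under the weight-$k$ slash action. For $\gamma=\begin{pmatrix}a&b\\c&d\end{pmatrix}$ with $\det\gamma>0$ write $(f|_k\gamma)(z)=(cz+d)^{-k}f(\gamma z)$, so that, up to its normalizing power of $n$, the Hecke operator $T_n$ of (\ref{smalldealrelationhecke}) is a finite sum of such slashes over coset representatives $\gamma_i$ of determinant $n$. The plan is: (i) prove the intertwining identity $\partial_k(f|_k\gamma)=(\det\gamma)\,(\partial_kf)|_{k+2}\gamma$; (ii) sum over cosets to obtain a commutation relation between $\partial_k$ and $T_n$; (iii) iterate and read off the eigenvalue; and (iv) dispose of the converse by injectivity of $\partial^m$.

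For step (i) I would compute directly: with $g=f|_k\gamma$, expand $\partial_kg=\tfrac{1}{2\pi i}\bigl(g'+\tfrac{k}{2iy}g\bigr)$, where $y=\impart z$, using $\tfrac{d}{dz}(\gamma z)=(\det\gamma)(cz+d)^{-2}$ and $\impart(\gamma z)=(\det\gamma)\,y\,|cz+d|^{-2}$. The part involving $f'(\gamma z)$ assembles immediately into the holomorphic-derivative part of $(\det\gamma)(\partial_kf)|_{k+2}\gamma$; the content is that the remaining non-holomorphic terms --- the piece $-kc(cz+d)^{-k-1}f(\gamma z)$ produced by differentiating $(cz+d)^{-k}$, together with $\tfrac{k}{2iy}g$ --- collapse, via the elementary identity $-2iyc+(cz+d)=\overline{cz+d}$, onto exactly the $\tfrac{k}{2i\,\impart(\gamma z)}$ term of $\partial_kf$ evaluated at $\gamma z$. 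This cancellation is the computational heart of the argument.

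For steps (ii)--(iii), applying (i) termwise together with the linearity of $\partial_k$ gives, after accounting for the weight-dependent normalizing power of $n$ (which is $n^{k-1}$ in weight $k$ but $n^{k+1}$ in weight $k+2$), the relation $T_n\circ\partial_k=n\,\partial_k\circ T_n$, where the left $T_n$ acts in weight $k+2$ and the right in weight $k$. Iterating through $\partial^m=\partial_{k+2(m-1)}\circ\cdots\circ\partial_k$ contributes one factor of $n$ at each of the $m$ stages, so $T_n\circ\partial^m=n^m\,\partial^m\circ T_n$. Applied to an eigenform with $T_nf=a_nf$ this yields $T_n(\partial^mf)=n^m\partial^m(T_nf)=n^m a_n\,\partial^mf$, which is the forward implication together with the eigenvalue formula.

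The converse follows from the same relation once $\partial^m$ is injective on $M_k(\Gamma)$: if $\partial^mf$ is a $T_n$-eigenform with eigenvalue $\lambda_n$, then $n^m\partial^m(T_nf)=T_n(\partial^mf)=\lambda_n\partial^mf=\partial^m(\lambda_nf)$, so $\partial^m(n^mT_nf-\lambda_nf)=0$ and hence $T_nf=(\lambda_n/n^m)f$. Injectivity is elementary for $m=1$, since $\partial_kf=0$ forces $f'=-\tfrac{k}{2iy}f$, whose right-hand side is non-holomorphic unless $f=0$; for general $m$ it follows because a nonzero holomorphic form generates a lowest-weight $\mathfrak{sl}_2$-module on which the raising operator acts injectively. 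I expect the main obstacle to lie not in these formal steps but in pinning down the power of $n$ in step (ii): a naive count of determinant factors alone would give eigenvalue $a_n/n$, and only by carrying the weight-dependent Hecke normalization correctly does one recover $n\,a_n$, hence $n^m a_n$.
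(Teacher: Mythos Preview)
The paper does not actually give a proof of this proposition: it is stated with attribution to Beyerl, James, Trentacose, and Xue \cite{BJCX} and no argument is supplied. So there is nothing in the paper to compare your proposal against.

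That said, your sketch is the standard and correct argument and is essentially the one in \cite{BJCX}. The identity $\partial_k(f|_k\gamma)=(\det\gamma)\,(\partial_kf)|_{k+2}\gamma$ you describe in step (i) is exactly the equivariance of the Shimura--Maass operator, and your derivation via $-2iyc+(cz+d)=c\overline{z}+d$ is the right computation. Summing over coset representatives and iterating gives $T_n\circ\partial^m=n^m\,\partial^m\circ T_n$, and the injectivity of $\partial^m$ on holomorphic forms (via the lowest-weight $\mathfrak{sl}_2$-module structure, or the elementary $m=1$ argument you give) handles the converse. One minor caution: the Hecke operator in (\ref{smalldealrelationhecke}) is written in the paper without any normalizing power, and is indexed by prime ideals $\frakp$ rather than integers $n$, so the bookkeeping you flag in your final paragraph needs to be adapted to the quaternionic setting; the ``$n$'' in the proposition should be read as $\N\frakp$, and the factor of $\det\gamma$ from step (i) is what produces the $\N\frakp$ in the eigenvalue directly, with no competing normalization to track. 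This does not affect the substance of your argument.
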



\subsection*{Computing the numerical kernel}

Following the previous subsections, we assemble linear relations into a matrix $A$ with $M$ rows and $N+1$ columns such that $Ab \approx 0$.  We now turn to compute the numerical kernel of $A$.  We may assume that $M \geq N$ but do not necessarily require that $M=N$.

Suppose first we are in the case where the space of forms of interest is one-dimensional.  This happens when, for example, $X=\Gamma \backslash \calH$ has genus $g=1$ and $k=2$; one can also arrange for this as in the previous section by adding linear relations coming from Hecke operators.  Then since $0$ always belongs to the numerical kernel, we can dehomogenize by setting $b_0=0$; then letting $A'$ be the $M \times N$ matrix consisting of all columns of $A$ but the first column, let $b'$ be the column vector with unknown entries $b_1,\dots,b_N$, and let $a$ be the first column of $A$.  Then $A'b'=-a$, and we can apply any numerically stable linear algebra method, an LU decomposition for example, to find a (least-squares) solution.  

There is a more general method to compute the entire numerical kernel of $A$ which yields more information about the numerical stability of the computation.  Namely, we compute the singular value decomposition (SVD) of the matrix $A$, writing 
$$A = USV^*$$
where $U$ and $V$ are $M \times M$ and $(N+1)\times(N+1)$ unitary matrices and $S$ is diagonal, where $V^*$ denotes the conjugate transpose of $V$.  The diagonal entries of the matrix $S$ are the \emph{singular values} of $A$, the square roots of the eigenvalues of $A^*A$, and may be taken to occur in decreasing magnitude.  Singular values that are approximately zero correspond to column vectors of $V$ that are in the numerical kernel of $A$, and one expects to have found a good quality numerical kernel if the other singular values are not too small.

\subsection*{Confirming the output}

We have already mentioned several ways to confirm that the output looks correct.  The first is to simply decrease $\eps$ and see if the coefficients $b_n$ converge.  The second is to look at the singular values to see that the approximately nonzero eigenvalues are sufficiently large (or better yet, that the dimension of the numerical kernel is equal to the dimension of the space $M_k(\Gamma)$, when it can be computed using other formulas).

More seriously, we can also check that the modularity relations (\ref{bigdealrelation}) hold for a point $w \in \calD$ with $|w| \leq \rho$ but $w \not \in D$.  (Such points always exist as the fundamental domain $D$ is hyperbolically convex.)  This test is quick and already convincingly shows that the computed expansion transforms like a modular form of weight $k$.

Finally, when $f$ is an eigenform for a congruence group $\Gamma$, we can check that $f$ is indeed numerically an eigenform (with the right eigenvalues) and that the coefficients, when normalized as (\ref{normalizeOmega}), appear to be algebraic using the LLL-algorithm \cite{LLL}.

\section{Results}

In this section, we present four examples to demonstrate our method.

\subsection*{Example 1}

We begin by computing with a classical modular form so that we can use its $q$-expansion to verify that the power series expansion is correct.  However, our method does not work well in this non-cocompact situation, so we prepare ourselves for poor accuracy.

Let $f\in S_2(\Gamma_0(11))$ be the unique normalized eigenform of weight $2$ and level $11$, defined by
\[ f(z) = q\prod_{n=1}^{\infty}(1-q^n)^2(1-q^{11n})^2  = q - 2q^2 - q^3 + 2q^4 + ... = \sum_{n=1}^{\infty}a_nq^n \]
where $q = e^{2\pi i z}$.  We choose a CM point (Heegner point) for $\Gamma_0(11)$ for the field $K=\Q(\sqrt{-7})$ with absolute discriminant $d=7$, namely $p=(-9+\sqrt{-7})/22$.  We find the power series expansion (\ref{powseries}) written (\ref{normalizeOmega}) directly using Lemmas \ref{powcn} and \ref{Dvspartial}: we obtain
\begin{align*} 
f(z) &= (1-w)^2 \sum_{n=0}^{\infty} b_n w^n = 
f(p) (1-w)^2 \sum_{n=0}^{\infty} \frac{c_n}{n!} (\Theta w)^n  \\
  &=-\sqrt{3+4\sqrt{-7}} \Omega^2 (1-w)^2 \left(1 + \Theta\omega + \frac{5}{2!} (\Theta w)^2 - \frac{123}{3!} (\Theta w)^3 \right. \\
& \qquad\qquad\qquad\qquad\qquad\qquad\qquad \left. - \frac{59}{4!} (\Theta w)^4 - \frac{6435}{5!} (\Theta w)^5 + \ldots  \right)
\end{align*}
where 
\begin{equation} \label{whatistheta}
 \Theta = -4\pi y \frac{(\partial f)(p)}{f(p)} = \frac{-4+2\sqrt{-7}}{11} \pi \Omega^2 
\end{equation}
and
\begin{align*} 
\Omega &= \frac{1}{\sqrt{2d\pi}} \left(\prod_{j=1}^{d-1} \Gamma(j/d)^{(-d/j)}\right)^{1/2h(d)} \\
&=\frac{1}{\sqrt{14\pi}} \left(\frac{\Gamma(1/7)\Gamma(2/7)\Gamma(4/7)}{\Gamma(3/7)\Gamma(5/7)\Gamma(6/7)}\right)^{1/2} = 0.5004912\ldots,
\end{align*}
and $-\sqrt{3+4\sqrt{-7}} = -2.6457513\ldots + 2i$.  The coefficients $c_n$ are found always to be integers, and so are obtained by rounding the result obtained by numerical approximation; however, we do not know if this integrality result is a theorem, and consequently this expansion for $f$ is for the moment only an experimental observation.

We now apply our method and compare.  We first compute a fundamental domain for $\Gamma_0(11)$ with center $p$, shown in Figure \ref{funddom-ex1}.

\begin{figure}[t]
\sidecaption[t] \centering
\includegraphics[scale=1]{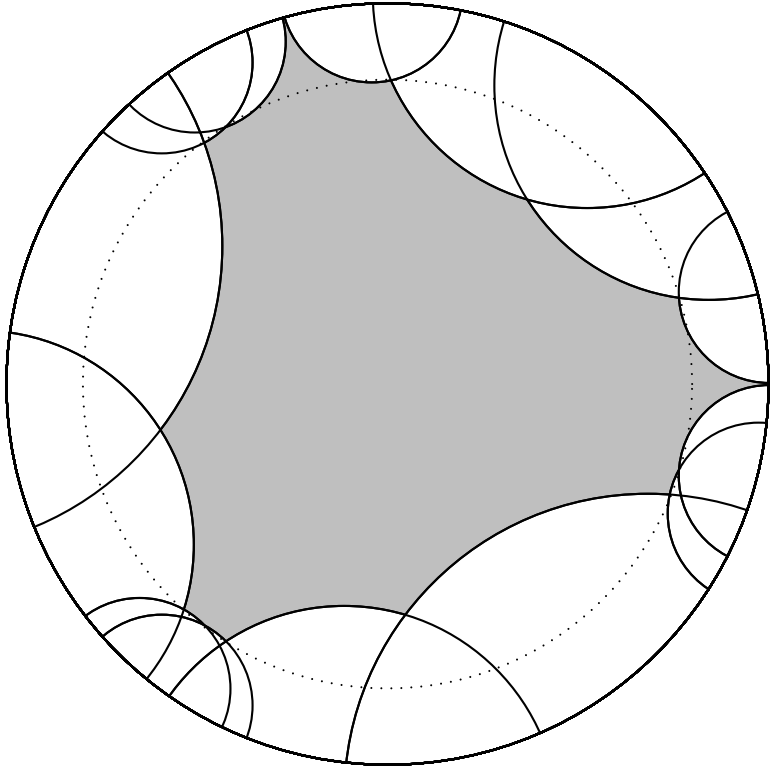}
\caption{A fundamental domain for the modular curve $X_0(11)$}
\label{funddom-ex1}
\end{figure}

Now we take $\epsilon = 10^{-20}$.  In this situation, the fundamental domain is not compact, so we must adapt our method.  We choose a radius $\rho$ for which the image under $\Gamma_0(11)$ to the fundamental domain will yield nontrivial relations; we choose $\rho=0.85$.  Then by (\ref{valueforN}), we may take $N=300$ (adding some for good measure) for the estimate $f(z) \approx f_N(z)$ whenever $|w(z)|\leq \rho$.

We compute the relations (\ref{bigdealrelationriemann}) from the Cauchy integral formula for radius $R$, and we add the relations (\ref{bigdealrelationhecke}) coming from Hecke operators for the primes $2,3,5,7,13$.  (The fact that the image of a point in the fundamental domain under the Hecke operators can escape to a point in the fundamental domain but with radius $> R$ precludes the further use of higher Hecke operators.)

We compute the SVD of this matrix of relations and find the smallest singular value to be $<\eps$; the next largest singular value is only $4$ times as large, so also arguably negligible.  In any case, the associated element of the numerical kernel corresponding to the smallest singular value yields a series $(1-w)^k \sum_{n=0}^{N} \widetilde{b}_n w^n$ with $R^n|\widetilde{b}_n-b_n| < 10^{-9}$ for the first $10$ coefficients but increasingly inaccurate for $n$ moderate to large.  Given that we expected unsatisfactory numerical results, this test at least indicates that our matrix of relations (coming from both the Cauchy integral formula and Hecke operators) passes one sanity check.

\subsection*{Example 2} 

For a comparison, we now consider the well-studied example arising from the $(2,4,6)$-triangle group associated to the quaternion algebra of discriminant $6$, referenced in the introduction.  

We follow Bayer \cite{Bayer}, Bayer and Travesa \cite{BayerTravesa1,BayerTravesa2}, and Baba and Granath \cite{BabaGranath}; for further detail, we refer to these articles.  

Let $B=\quat{3,-1}{\Q}$ be the quaternion algebra of discriminant $6$ over the rationals $\Q$, so that $\alpha^2=3$, $\beta^2=-1$, and $\beta\alpha=-\alpha\beta$.  A maximal order $\calO \subseteq B$ is given by
\[ \calO=\Z \oplus \alpha\Z \oplus \beta\Z \oplus \delta \Z \]
where $\delta=(1+\alpha+\beta+\alpha\beta)/2$.  We have the splitting
\begin{align*}
\iota_\infty:B &\hookrightarrow \M_2(\R) \\
\alpha,\beta &\mapsto \begin{pmatrix} \sqrt{3} & 0 \\ 0 & -\sqrt{3} \end{pmatrix}, \begin{pmatrix} 0 & 1 \\ -1 & 0 \end{pmatrix}. 
\end{align*}
Let $\Gamma=\iota_\infty(\calO_1^\times)/\{\pm 1\}$.  Then $\Gamma$ is a group of signature $(0;2,2,3,3)$ which is an index $4$ normal subgroup in the triangle group 
\[ \Delta(2,4,6)=\la \gamma_2, \gamma_4, \gamma_6 \mid \gamma_2^2=\gamma_4^4=\gamma_6^6 = \gamma_2\gamma_4\gamma_6 = 1 \ra. \]

The point 
\[ p=\frac{\sqrt{6}-\sqrt{2}}{2} i \]
is a CM point of absolute discriminant $d=24$.  The Dirichlet fundamental domain $D(p)$ is then given in Figure \ref{funddom-ex2}.  (A slightly different fundamental domain, emphasizing the relationship to the triangle group $\Delta(2,4,6)$, is considered by the other authors.)  We have $\rho=0.447213\dots$.

\begin{figure}[t]
\sidecaption[t] \centering
\includegraphics[scale=1]{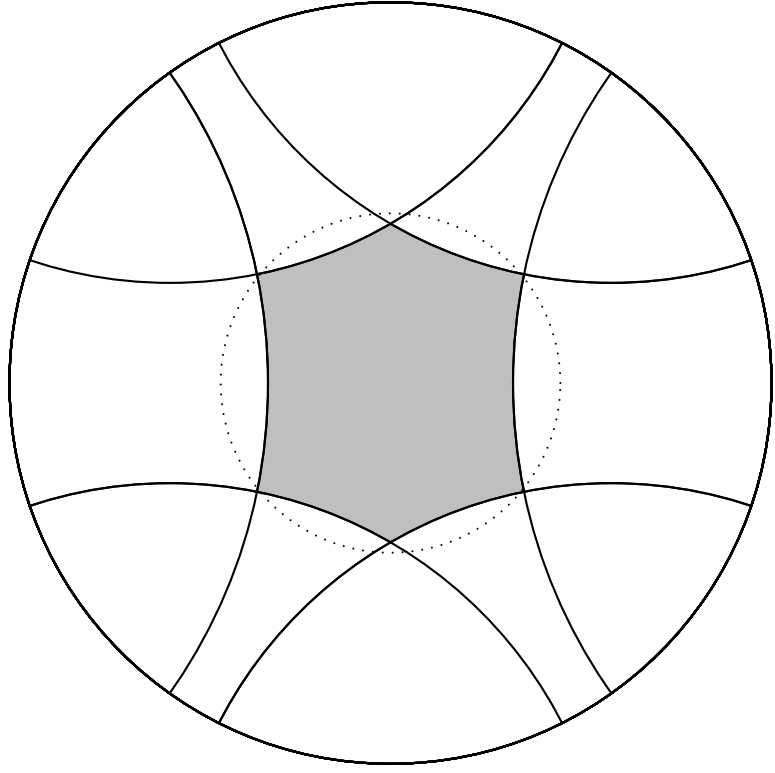}
\caption{A fundamental domain for the Shimura curve $X$ associated to a maximal order in the quaternion algebra of discriminant $6$ over $\Q$}
\label{funddom-ex2}
\end{figure}

We consider the space $S_4(\Gamma)$ of modular forms for $\Gamma$ of weight $4$; by Riemann-Roch, this space has dimension $1$ and so is generated by a modular form $P \in S_4(\Gamma)$ with expansion
\begin{equation}  \label{bnS4}
\begin{aligned}
P(z) = \bigl(\sqrt{-3}(2-\sqrt{3})\Omega^4 \bigr) (1-w)^4 &\left(
\frac{1}{12} + \frac{5}{12} \frac{1}{2!} (\Theta w)^2 - \frac{45}{8} \frac{1}{4!} (\Theta w)^4 + \right. \\
& \quad \left. \frac{555}{4} \frac{1}{6!} (\Theta w)^6 + \frac{57165}{8} \frac{1}{8!} (\Theta w)^8 + \dots \right)
\end{aligned}
\end{equation}
where $\Theta = -4\pi \Omega^2$ and 
\[ \Omega = \frac{1}{\sqrt{2d\pi}} \left(\prod_{j=1}^{d-1} \Gamma(j/d)^{(-d/j)}\right)^{1/2h(d)} = 0.321211772390\ldots \]
with $d=24$.  

\begin{remark}
The algebraic factor $y=\impart(p)=\sqrt{2}(\sqrt{3}-1)/2$ need not be multipled in the period $\Theta$, as expected by (\ref{normalizeOmega}), to obtain coefficients defined over $\Q$.  
\end{remark}

We present the algorithm using the formulas obtained from Simpson's rule.  Let $K^c$ be the $(N+1)\times (N+1)$ matrix obtained from Simpson's rule.  We then take a square $N\times N$ submatrix of full rank and set $k_0$ to be the removed row vector to obtain a system $Ab=k_0$; we wish to solve for $b$.  An LU decomposition and back substitution is then used to obtain coefficients $b_n$ normalized so that $b_0 = 1$.  Let $b_{n,\textup{exact}}$ denote the exact coefficients in (\ref{bnS4}) above, renormalized so that $b_{0,\textup{exact}} = 1$.  We set $Q = 2N$ as this choice provides a good approximation to the integral.  Then we obtain the following errors.

\begin{table}
\centering
\caption{Example 2 Results}
\label{tab:1}       
%
%
\begin{tabular}{p{2cm}p{2.4cm}p{2.9cm}}
\hline\noalign{\smallskip}
$N$ & $\rho|b_{1,\textup{exact}}-b_1|$ & $\max_n \rho^n|b_{n,\textup{exact}}-b_n|$ \\
\noalign{\smallskip}\svhline\noalign{\smallskip}
	35 & $10^{-13}$ & $10^{-13}$\\
	70  & $10^{-23}$ & $10^{-22}$\\
	 140 & $10^{-47}$ & $10^{-47}$\\
\noalign{\smallskip}\hline\noalign{\smallskip}
\end{tabular}
\end{table}

We find that with a fixed radius, we obtain better precision in the answer as $N$ is increased.  That is, the results are completely determined, for a fixed radius and precision, by the degree of the approximating polynomial.

\begin{remark} In contrast to the non-cocompact case, there is no gain in the accuracy of the results from taking a larger radius.  Hence it suffices to fix the radius $\rho$. 
\end{remark}

\subsection*{Example 3}

Next, we work with an arithmetic group over a totally real field, and compute the image of a CM point under the Shimura curve parametrization of an elliptic curve.  Let $F=\Q(a)=\Q(\sqrt{5})$ where $a^2+a-1=0$, and let $\Z_F$ be its ring of integers.  Let $\frakp=(5a+2)$, so $N\frakp=31$.  Let $B$ be the quaternion algebra ramified at $\frakp$ and the real place sending $\sqrt{5}$ to its positive real root: we take $B=\quat{a,5a+2}{F}$.  We consider $F \hookrightarrow \R$ embedded by the second real place, so $a=(1-\sqrt{5})/2=-0.618033\ldots$.  

A maximal order $\calO \subset B$ is given by
\[ \calO = \Z_F \oplus \Z_F \alpha \oplus \Z_F \frac{a+a\alpha-\beta}{2} \oplus \Z_F \frac{(a-1)+a\alpha-\alpha\beta}{2}. \]
Let $\iota_\infty$ be the splitting at the other real place given by
\begin{align*}
\iota_\infty : B &\hookrightarrow \M_2(\R) \\
\alpha,\beta &\mapsto 
\begin{pmatrix}
0 & \sqrt{a} \\ \sqrt{a} & 0
\end{pmatrix}, \ 
\begin{pmatrix}
\sqrt{5a+2} & 0 \\ 0 & -\sqrt{5a+2}
\end{pmatrix}
\end{align*}

Let $\Gamma=\iota_{\infty}(\calO_1^\times)/\{\pm 1\} \subseteq \PSL_2(\R)$.  Then $\Gamma$ has hyperbolic area $1$ (normalized so that an ideal triangle has area $1/2$) and signature $(1;2^2)$, so $X=\Gamma \backslash \calH$ can be given the structure of a compact Riemann surface of genus $1$.  Consequently, the space $S_2(\Gamma)$ of modular forms on $\Gamma$ of weight $2$ is $1$-dimensional, and it is this space that we will compute.

The field $K=F(\sqrt{-7})$ embeds in $\calO$ with 
\[ \mu = -\frac{1}{2} - \frac{5a+10}{2}\alpha - \frac{a+2}{2}\beta + \frac{3a-5}{2}\alpha\beta \in \calO \]
satisfying $\mu^2 + \mu + 2 =0$ and $\Z_F[\mu]=\Z_K$ the maximal order with class number $1$.  We take $p=-3.1653\ldots + 1.41783\ldots\sqrt{-1} \in \calH$ to be the fixed point of $\mu$, a CM point of discriminant $-7$.

We compute a Dirichlet fundamental domain $D(p)$ for $\Gamma$ as in Figure \ref{funddom-ex31}.

\begin{figure}[t]
\sidecaption[t] \centering
\includegraphics[scale=1]{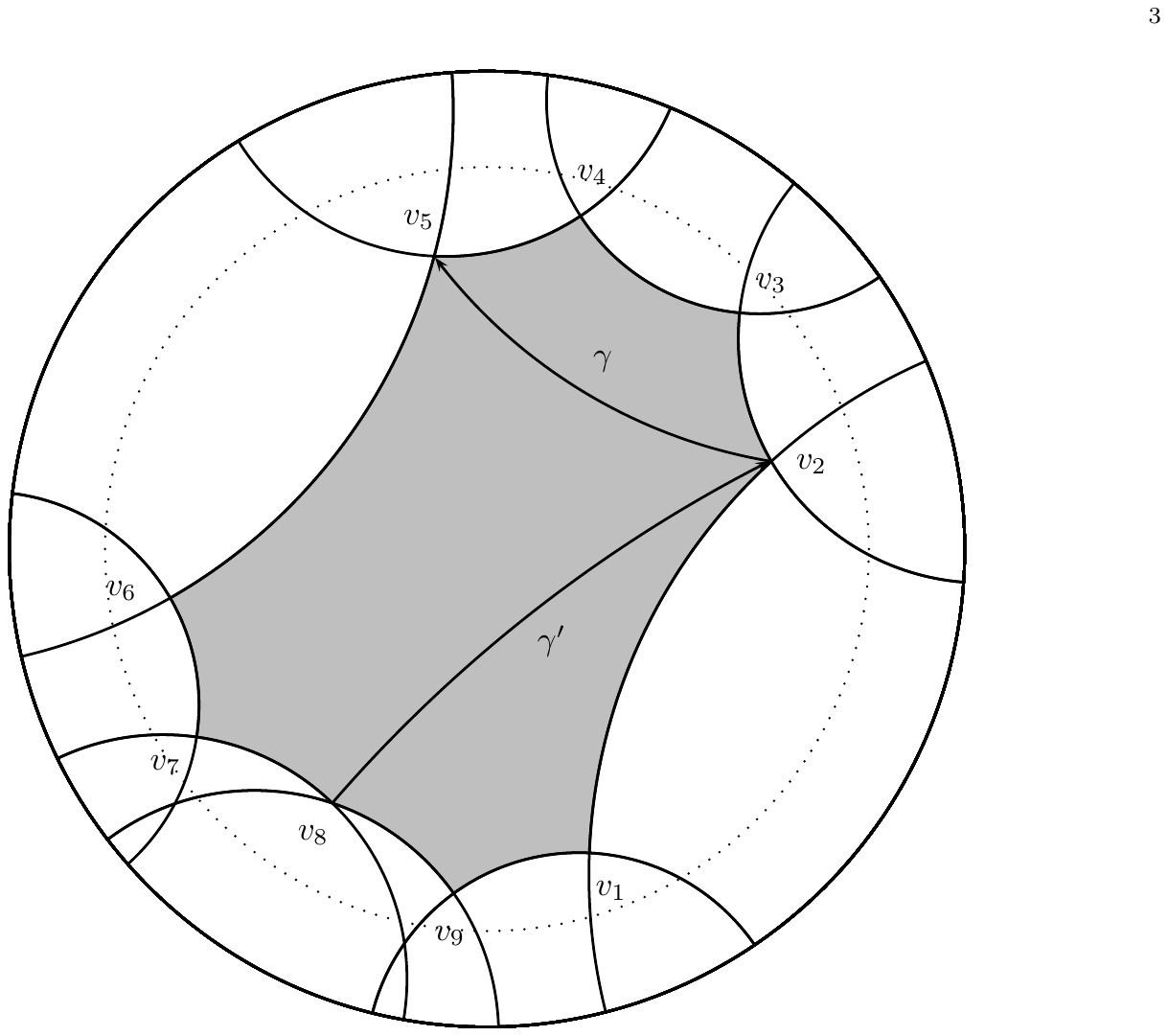}
\caption{A fundamental domain for the Shimura curve $X$ associated to a maximal order in the quaternion algebra ramified at a prime above $31$ and the first real place over $F=\Q(\sqrt{5})$}
\label{funddom-ex31}
\end{figure}

We have $\rho=0.71807\ldots$ so for $\eps=10^{-20}$ we take $N=150$, and we take $R=0.8$.  As $S_2(\Gamma)$ is $1$-dimensional, we use only the relations coming from the Cauchy integral formula (\ref{bigdealrelationriemann}) and reserve the relations from the Hecke operators as a check.  The $(N+1) \times (N+1)$-matrix has largest singular value $4.01413\ldots$ and one singular value which is $<\eps$---the next largest singular value is $0.499377\ldots$, showing that the numerical kernel is one-dimensional.  

Computing this kernel, we find numerically that
\begin{equation} \label{ex3}
\begin{aligned} 
f(z) &= (1-w)^2 \left(1 + (\Theta w) - 
\frac{70a + 114}{2!} (\Theta w)^2 - 
\frac{8064a + 13038}{3!} (\Theta w)^3 + \right. \\
& \quad\quad \frac{174888a + 282972}{4!} (\Theta w)^4 - 
\frac{13266960a + 21466440}{5!} (\Theta w)^5 - \\
&\quad\quad \frac{1826784288a + 2955799224}{6!} (\Theta w)^6 - \\
&\quad\quad \left. \frac{2388004416a + 3863871648}{7!} (\Theta w)^7 + \dots \right) 
\end{aligned}
\end{equation}
where 
\[ \Theta = 0.046218579529208499918\ldots - 0.075987317531832568351\ldots \sqrt{-1} \]
is a period akin to (\ref{whatistheta}) and related to the CM abelian variety associated to the point $p$.  (We do not have a good way to scale the function $f$, so we simply choose $f(p)=1$.)  The apparent integrality of the numerators allows the LLL-algorithm to identify these algebraic numbers even with such low precision.

We then also numerically verify (to precision $\eps$) that this function is an eigenfunction for the Hecke operators for all primes $\frakl$ with $N\frakl < 30$.  This gives convincing evidence that our function is correct.

We further compute the other embedding of this form by repeating the above computation with conjugate data: we take an algebra ramified at $\frakp$ and the other real place.  We find the following fundamental domain in Figure \ref{funddom-ex32}.

\begin{figure}[t]
\sidecaption[t] \centering
\includegraphics[scale=1]{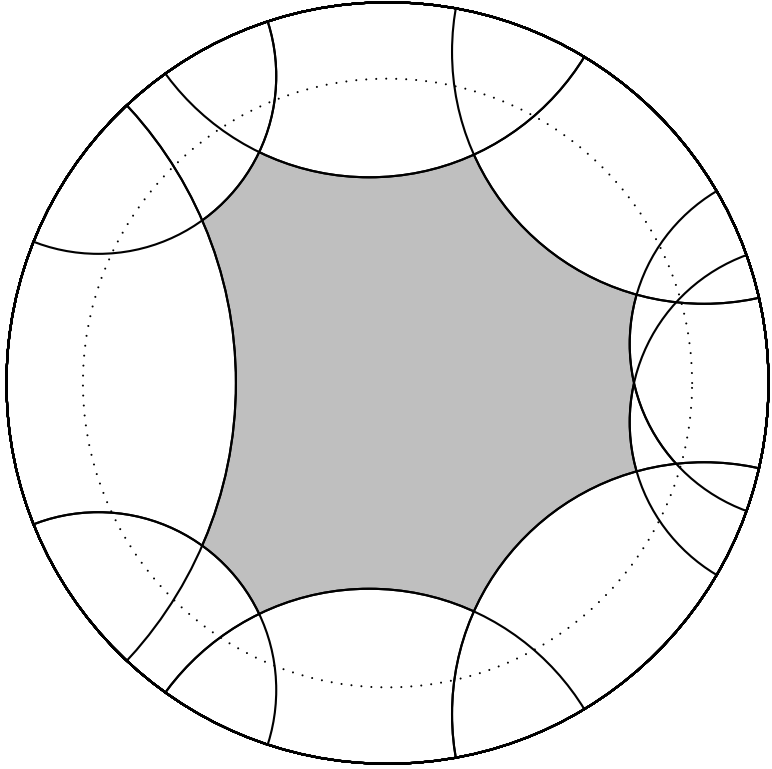}
\caption{A fundamental domain for the Shimura curve $X$ associated to a maximal order in the quaternion algebra ramified at a prime above $31$ and the second real place over $F=\Q(\sqrt{5})$}
\label{funddom-ex32}
\end{figure}

A similar computation yields a matrix of relations with a one-dimensional numerical kernel whose corresponding expansion has coefficients which agree with the conjugates of those in (\ref{ex3}) under  $a \mapsto -(a+1)$, the generator of the group $\Gal(\Q(\sqrt{5})/\Q)$.

Next, we identify the equation of the Jacobian $J$ of the curve $X$ by computing the associated periods.  We first identify the group $\Gamma$ using the sidepairing relations coming from the computation of $D(p)$ \cite{Voight-funddom}: 
\[ \Gamma \cong \la \gamma_1, \gamma_2, \delta_1, \delta_2 \mid \delta_1^2 = \delta_2^2 = 
\gamma_1^{-1} \gamma_2^{-1} \delta_1 \gamma_1 \gamma_2 \delta_2 = 1 \ra \]
where
\begin{align*}
\gamma_1 &= \frac{a+2}{2} - \frac{2a+3}{2}\alpha + \frac{a+1}{2}\alpha\beta \\
\gamma_2 &= \frac{2a + 3}{2} + \frac{7a + 10}{2}\alpha + \frac{a + 2}{2}\beta - (3a + 5)\alpha\beta
\end{align*}
generate the free part of the maximal abelian quotient of $\Gamma$.  The elements $\gamma_1,\gamma_2$ identify vertices: $v_2 \mapsto v_5=\gamma_1(v_2)$ and $v_8 \mapsto v_2=\gamma_2(v_8)$.  Therefore, we compute two independent periods $\omega_1,\omega_2$ (up to scaling) 
\begin{align*}
\omega_1 &= \int_{v_2}^{v_5} f(z) \frac{dw}{(1-w)^2} \approx \left(\sum_{n=0}^N \frac{b_n}{n+1} w^{n+1} \right)\Biggl|_{v_2}^{v_5} = -0.654017\ldots + 0.397799\ldots i \\
\omega_2 &= \int_{v_8}^{v_2} f(z) \frac{dw}{(1-w)^2} = 0.952307\ldots + 0.829145\ldots i.
\end{align*}
We then compute the $j$-invariant 
\[ j(\omega_1/\omega_2)=-18733.423\ldots = -\frac{11889611722383394a + 8629385062119691}{31^8} \]
and by computing the values of the Eisenstein series $E_4(\omega_1/\omega_2)$ and $E_6(\omega_1/\omega_2)$, or by twisting a curve with $j$-invariant $j(\omega_1/\omega_2)$, we identify the elliptic curve $J$ as
\[ y^2 + xy - ay = x^3 - (a - 1)x^2 - (31a + 75)x - (141a + 303). \]
We did this by recognizing this algebraic number using LLL and then confirming by computing the conjugate of $j$ under $\Gal(\Q(\sqrt{5})/\Q)$ as above and recognizing the trace and norm as rational numbers using simple continued fractions.  Happily, we see that this curve also appears in the tables of elliptic curves over $\Q(\sqrt{5})$ computed \cite{Dembele, Stein}.

Finally, we compute the image on $J$ of a degree zero divisor on $X$.  The fixed points $w_1,w_2$ of the two elliptic generators $\delta_1$ and $\delta_2$ are CM points of discriminant $-4$.  Let $K=F(i)$ and consider the image of $[w_1]-[w_2]$ on $J$ given by the Abel-Jacobi map as
\[ \int_{w_1}^{w_2} f(z) \frac{dw}{(1-w)^2} \equiv -0.177051\ldots - 0.291088\ldots i \pmod{\Lambda} \]
where $\Lambda=\Z\omega_1 + \Z\omega_2$ is the period lattice of $J$.  Evaluating the elliptic exponential, we find the point
\[ (-10.503797\ldots, 5.560915\ldots - 44.133005\ldots i) \in J(\C) \]
which matches to the precision computed $\eps=10^{-20}$ the point
\[ Y = \left(\frac{-81a-118}{16}, \frac{(358a+1191)i + (194a+236)}{64} \right) \in J(K) \cong \Z/4\Z \oplus \Z \]
which generates $J(K)/J(K)_{\textup{tors}}$.  On the other hand, the Mordell-Weil group of $J$ over $F$ is in fact torsion, with $J(F) \cong \Z/2\Z$, generated by the point $\bigl((28a + 27)/4,-(24a + 27)/8\bigr)$.

\subsection*{Example 4}

To conclude, we compute the equation of a genus $2$ Shimura curve over a totally real field.  (There is nothing special about this curve; we chose it because only it seemed to provide a nice example.)

Let $F=\Q(a)=\Q(\sqrt{13})$ where $w^2-w-3=0$, and let $\Z_F=\Z[w]$ be its ring of integers.  Let $\frakp=(2w-1)=(\sqrt{13})$.  We consider the quaternion algebra $B=\quat{w-2,2w-1}{F}$ which is ramified at $\frakp$ and the real place sending $a \mapsto (1-\sqrt{13})/2=-1.302775\ldots$.  We consider $F \hookrightarrow \R$ embedded by the first real place, so $w=(1+\sqrt{13})/2=2.302775\ldots$.  

A maximal order $\calO \subset B$ is given by
\[ \calO = \Z_F \oplus \Z_F \alpha \oplus \Z_F \frac{w\alpha+\beta}{2} \oplus \Z_F \frac{(w+1)+\alpha\beta}{2}. \]
Let $\iota_\infty$ be the splitting 
\begin{align*}
\iota_\infty : B &\hookrightarrow \M_2(\R) \\
\alpha,\beta &\mapsto 
\begin{pmatrix}
0 & \sqrt{w} \\ \sqrt{w} & 0
\end{pmatrix}, \ 
\begin{pmatrix}
\sqrt{2w-1} & 0 \\ 0 & -\sqrt{2w-1}
\end{pmatrix}
\end{align*}

Let $\Gamma=\iota_{\infty}(\calO_1^\times)/\{\pm 1\} \subseteq \PSL_2(\R)$.  Then $\Gamma$ has hyperbolic area $2$ and signature $(2;-)$, so $X=\Gamma \backslash \calH$ can be given the structure of a compact Riemann surface of genus $2$ and $S_2(\Gamma)$ is $2$-dimensional.  We compute \cite{GV} that this space is irreducible as a Hecke module, represented by a constituent eigenform $f$ with the following eigenvalues:
\[
\begin{array}{c||cccccccc}
\frakp  & (w) & (w-1) & (2) & (2w-1) & (w+4) & (w-5) & (3w+1) & (3w-4) \\
N\frakp & 3 & 3 & 4 & 13  & 17 & 17 & 23 & 23   \\
\hline
\rule{0pt}{2.5ex} 
a_\frakp(f) & -e+1 & e & -2 & -1 & 2e-4 & -2e-2 & -2e+4 & 2e+2 \\
\end{array}  
\] 
Here, the element $e \in \Qbar$ satisfies $e^2-e-5=0$ and the Hecke eigenvalue field $E=\Q(e)$ has discriminant $21$.  Let $\tau \in \Gal(E/\Q)$ represent the nontrivial element; then $S_2(\Gamma)$ is spanned by $f$ and $\tau(f)$, where $a_\frakp(\tau(f))=\tau(a_{\frakp}(f))$.  

We compute a Dirichlet fundamental domain $D(p)$ for $\Gamma$ as in Figure \ref{funddom-ex4}.

\begin{figure}[t]
\sidecaption[t] \centering
\includegraphics[scale=0.9]{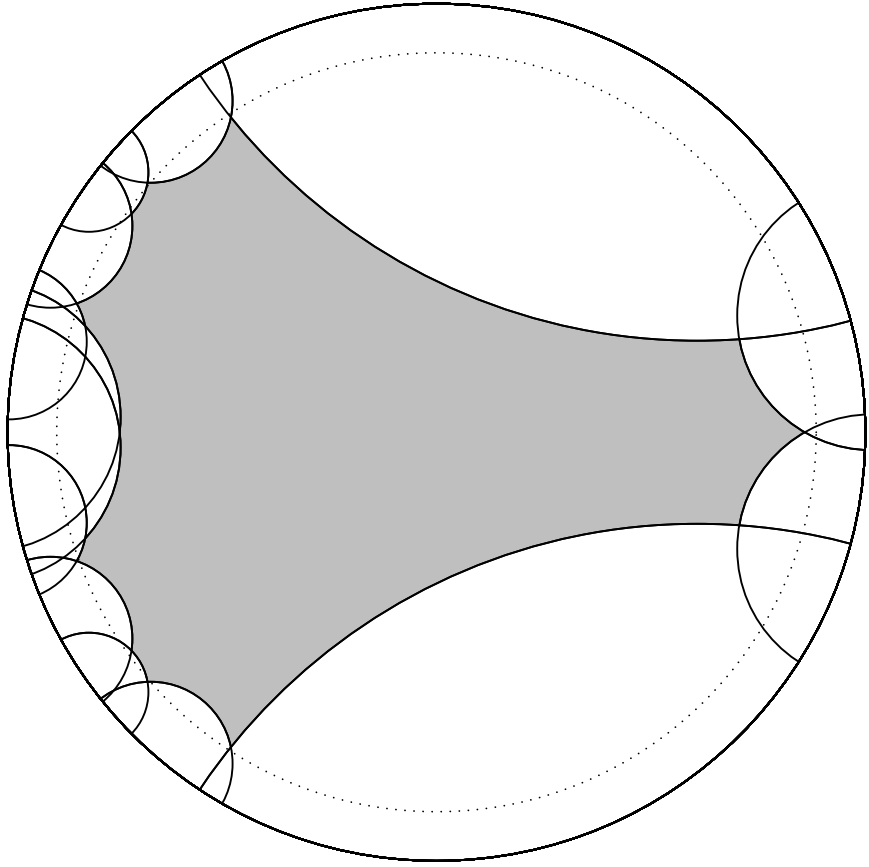}
\caption{A fundamental domain for the Shimura curve $X$ associated to a maximal order in the quaternion algebra ramified at a prime above $13$ and the second real place over $F=\Q(\sqrt{13})$}
\label{funddom-ex4}
\end{figure}

We have $\rho=0.885611\ldots$ so for $\eps=10^{-20}$ we take $N=600$.  For the form $f$ and its conjugate $\tau(f)$, we add the implied relations for the Hecke operators to those from the Cauchy integral.  In each case, we find one singular value $<\eps$ and the next largest singular value to be $>0.2$, so we have indeed found two one-dimensional kernels.

We first consider $g=f+\tau(f)$, and find numerically that
\begin{equation} \label{ex4g}
\begin{aligned} 
g(z) &= (1-w)^2 \left(1 + 117 (\Theta w) 
+ \frac{28496}{2!} (\Theta w)^2
-\frac{14796288}{3!} (\Theta w)^3 \right. \\
& \quad\quad -\frac{3287025664}{4!} (\Theta w)^4 -
\frac{1142501867520}{5!} (\Theta w)^5 \\
&\quad\quad  \left.  -\frac{116349452943360}{6!} (\Theta w)^6 -\frac{420556103693107200}{7!} (\Theta w)^7 + \dots \right) 
\end{aligned}
\end{equation}
where 
\[ \Theta = 0.008587333922373292375569160851\ldots. \]
We see in particular that the coefficients are rational numbers.  

We then compute $h=(f-\tau(f))/\sqrt{21}$ and find
\begin{equation} \label{ex4h}
\begin{aligned} 
h(z) &= \frac{7\sqrt{13}}{3}(1-w)^2 \left(0 - (\Theta w) 
+ \frac{624}{2!} (\Theta w)^2
-\frac{299520}{3!} (\Theta w)^3 \right. \\
& \quad\quad -\frac{9345024}{4!} (\Theta w)^4 +
\frac{180255129600}{5!} (\Theta w)^5 \\
&\quad\quad  \left.  -\frac{160373570273280}{6!} (\Theta w)^6 +\frac{22574212778557440}{7!} (\Theta w)^7 + \dots \right) 
\end{aligned}
\end{equation}

Then, from this basis of differentials, we can use the standard Riemann-Roch basis argument to compute an equation for the curve $X$ (see e.g.\ Galbraith \cite[\S 4]{Galbraith}).  Let $x=g/h$ and $y=x'/h$, where ${}'$ denotes the derivative with respect to $\Theta w$; then since $h$ has a zero of order $1$ at $p$, the function $x$ has a pole of order $1$, and similarly $y$ has a pole of order $3$, therefore by Riemann-Roch we obtain an equation $y^2=q(x)$ with $q(x) \in \Q[x]$: 
\begin{align*} y^2 &= x^6+1950x^5+828919x^4-122128188x^3+3024544159x^2 \\
&\qquad -29677133122x+107045514121.
\end{align*}
This curve has a reduced model
\[ y^2 + y = 7x^5 - 84x^4 + 119x^3 + 749x^2 +    938x + 390 \]
and this reduced model has discriminant $2995508600908518877=7^{10} 13^9$.  

The curve $X$ over $F$, however, has good reduction away from $\frakD=(\sqrt{13})$ \cite{Carayol}.  Indeed, since we don't know the precise period to normalize the forms, we may inadvertently introduce a factor coming from the field $K=F(\sqrt{-7})$ of CM of the point $p$ at which we have expanded these functions as a power series.  In this case, our model is off by a quadratic twist coming from the CM field $K=F(\sqrt{-7})$: twisting we obtain
\begin{equation} \label{Xeq}
X: y^2 + y = x^5 + 12x^4 + 17x^3 - 107x^2 + 134x - 56.
\end{equation}
The only automorphism of $X$ is the hyperelliptic involution, so $\Aut(X)=\Z/2\Z$.

Since $F$ is Galois over $\Q$, has narrow class number $1$, and the discriminant $\frakD$ of $B$ is invariant under $\Gal(F/\Q)$, it follows from the analysis of Doi and Naganuma \cite{DoiNaganuma} that the field of moduli of the curve $X$ is $\Q$.  This does not, however, imply that $X$ admits a model over $\Q$: there is an obstruction in general to a curve of genus $2$ to having a model over its field of moduli (Mestre's obstruction \cite{Mestre}, see also Cardona and Quer \cite{CaQuer}).  This obstruction apparently vanishes for $X$; it would be interesting to understand this phenomenon more generally.

Finally, this method really produces numerically the canonical model of $X$ (equation (\ref{Xeq}) considered over the reflex field $F$), in the sense of Shimura \cite{Shimuracanon}.  In this way, we have answered ``intriguing question'' of Elkies \cite[5.5]{Elkies}.

\begin{acknowledgement}
The authors would like to thank Srinath Baba, Valentin Blomer, Noam Elkies, David Gruenewald, Kartik Prasanna, Victor Rotger, and Frederik Str\"omberg for helpful comments on this work.  The authors were supported by NSF grant DMS-0901971.  
\end{acknowledgement}

\end{document}